\DeclareMathOperator{\pa}{pa}
\DeclareMathOperator{\an}{an}
\DeclareMathOperator{\de}{de}
\DeclareMathOperator{\cn}{cn}
\DeclareMathOperator{\Do}{do}
\DeclareMathOperator{\forb}{forb}
\DeclareMathOperator{\rch}{rch}
\DeclareMathOperator{\clr}{clr}
\DeclareMathOperator{\opadj}{O}
\DeclareMathOperator{\uniform}{uniform}
\DeclarePairedDelimiterX{\infdivx}[2]{(}{)}{%
  #1\;\delimsize\|\;#2%
}
\newcommand\indep{\protect\mathpalette{\protect\independenT}{\perp}}
\def\independenT#1#2{\mathrel{\rlap{$#1#2$}\mkern2mu{#1#2}}}
\theoremstyle{plain}
\newtheorem{theorem}{Theorem}[section]
\newtheorem{corollary}{Corollary}[theorem]
\newtheorem{lemma}[theorem]{Lemma}
\newtheorem{proposition}[theorem]{Proposition}
\newtheorem{definition}[theorem]{Definition}
\theoremstyle{remark}
\newtheorem{remark}{Remark}
\theoremstyle{definition}
\newtheorem{example}{Example}[section]
\newcommand{\G}{{\cal G}}
\tikzset{deg/.style={->, very thick, color=blue}}
\tikzset{degl/.style={->, very thick, color=red}}
\tikzset{beg/.style={<->, very thick, color=red}}
\title{Selecting valid adjustment sets with uncertain causal graphs}
\author{{\bf Zhongyi Hu}\\Department of Mathematics\\Vrije Universiteit\\z.hu@vu.nl \and 
{\bf St\'{e}phanie~L.~van~der~Pas}\\Department of Mathematics\\Vrije Universiteit\\s.l.vander.pas@vu.nl}
\begin{document}

\allowdisplaybreaks

\maketitle
  \begin{abstract}
{Precise knowledge of causal directed acyclic graphs (DAGs) is assumed for standard approaches towards valid adjustment set selection for unbiased estimation, but in practice, the DAG is often inferred from data or expert knowledge, introducing uncertainty. We present techniques to identify valid adjustment sets despite potential errors in the estimated causal graph. Specifically, we assume that only the skeleton of the DAG is known. Under a Bayesian framework, we place a prior on graphs and wish to sample graphs and compute the posterior probability of each set being valid; however, directly doing so is inefficient as the number of sets grows exponentially with the number of nodes in the DAG. We develop theory and techniques so that a limited number of sets are tested while the probability of finding valid adjustment sets remains high. Empirical results demonstrate the effectiveness of the method.}
\end{abstract}

\section{Introduction}
When the causal \emph{directed acyclic graph} (DAG) \citep{pearl2000models} is unknown, standard adjustment set selection procedures designed for a known DAG but applied on an inferred DAG may result in invalid adjustment sets without the user realizing. Subsequent causal effect estimates may then be biased. Our goal is to select a set of variables $X_A$ that is a \emph{valid adjustment set}, that is, conditioning on them along with treatment $T$, the expected mean of the outcome $Y$, $\mathbb{E}[Y \mid T,X_A]$ can be used for unbiased estimation of the expected total effect of $T$ on $Y$, $\mathbb{E}[Y \mid \Do(T)]$ \citep{pearl2000models}. We develop methods for the practical situation where the DAG is inferred from data or expert knowledge. Our methods will find with high probability valid adjustment sets that are robust to some common types of graph misspecification.


\begin{figure}
\centering
  \begin{tikzpicture}
  [rv/.style={circle, draw, thick, minimum size=6mm, inner sep=0.8mm}, node distance=14mm, >=stealth]
  \pgfsetarrows{latex-latex};
\begin{scope}
  \node[rv]  (1)            {$T$};
  \node[rv, right of=1] (2) {$1$};
  \node[rv, right of=2] (3) {$2$};
  \node[rv, right of=3] (4) {$Y$};
  \node[rv, above of=2] (5) {$3$};
  \node[rv, below of=3] (6) {$4$};
  \node[rv, above of=1] (7) {$5$};
  \node[rv, above of=5] (8) {$6$};
  \node[rv, right of=5] (9) {$7$};
  \node[rv, above of=7] (10) {$8$};
  
  \draw[->, very thick, blue] (1) -- (2);
  \draw[->, very thick, blue] (2) -- (3);
  \draw[->, very thick, blue] (3) -- (4);
  \draw[->, very thick, blue] (3) -- (6);
  \draw[->,very thick, blue] (5) -- (2);

  \draw[->, very thick, blue] (7) -- (1);
  \draw[->, very thick, blue] (10) -- (7);
  \draw[->, very thick, blue] (7) -- (8);
  \draw[->, very thick, blue] (9) -- (8);
  \draw[->, very thick, blue] (9) -- (4);
  \end{scope}
\end{tikzpicture}
\caption{Graph for adjustment set examples. }
\label{fig: adjustment set examples}
\end{figure}
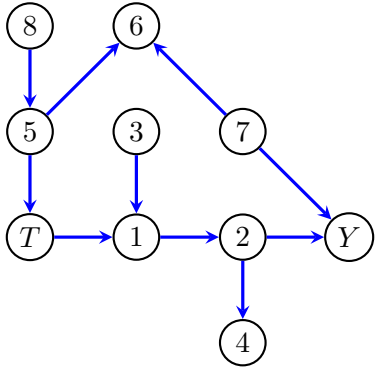

\textbf{Related work on adjustment sets when the causal DAG is known}.  Consider the causal DAG in Figure \ref{fig: adjustment set examples} with treatment $T$ and outcome $Y$. From the famous \emph{back-door criterion} \citep{pearl2000models}, the valid adjustment sets are any set from $\emptyset$, $\{5\}$, $\{7\}$, $\{5,7\}$, $\{5,6\}$, $\{6,7\}$, $\{5,6,7\}$, and combine with any subset of $\{3,8\}$. \citet{van2019separators} provide algorithms for computing all valid adjustment sets given a causal DAG and also show that all adjustment sets can be found with polynomial delay.

Different adjustment sets have different efficiency in terms of the variance of the estimated causal effect. \citet{henckel2022graphical} and \citet{smucler2022efficient} prove and construct the most efficient valid (`optimal') adjustment sets in the linear Gaussian model and the non-parametric causal model, respectively. For the causal DAG in Figure \ref{fig: adjustment set examples}, the optimal adjustment set is $\{3,7\}$. 

\textbf{Related work on adjustment sets when the causal DAG is unknown}. Often, only a Markov equivalence class of a DAG is known, which is usually represented by \emph{completely partially oriented directed acyclic graphs} (CPDAGs). Instead of directed edges, those graphs could have undirected edges, where an undirected edge represents that there are two DAGs lying in the same Markov equivalence class with this edge oriented differently. The theory of complete characterization of valid adjustment sets and optimal adjustment has also been extended to CPDAGs \citep{perkovi2018complete,henckel2022graphical}.

When only observational data are available, the data can be put in some graph learning algorithm, for example, PC \citep{pearl2000models}, GES \citep{chickering2002optimal}, LiNGAM \citep{shimizu2006linear}. Depending on the parametric assumption and the class of graphical model used, the output is either a DAG or a CPDAG. Then the methods mentioned above for known DAGs can be applied to compute valid adjustment sets. However, the actual validity of these sets depends on the accuracy of the output of the graph learning method, which is sensitive to the size of the data and the hyperparameters of the algorithms. Although there have been some results on the consistency of these algorithms \citep{kalisch2007estimating}, in practice, the conditions required for consistency are often difficult to satisfy or verify, and the output often does not match the true underlying graph. As a result, the corresponding valid adjustment sets in the estimated graph may not be valid in the underlying true graph. 

Little work has been done on computing adjustment sets together with consideration of the uncertainty of causal graphs, although some work has been done for situations where partial information on the DAG or data from multiple environments is available \citep{shah2022finding,shah2023front,shi2021invariant,de2025doubly}.

\textbf{Our contribution}. In this work, we present results on selecting valid adjustment sets in an estimated graph, while taking into account that the estimated graph could be incorrect. Theory about how the set of valid adjustment changes as the graph changes is developed, as well as efficient methods for computing valid adjustment sets in the face of uncertainty.  

Our work differs from \citet{vander2011new}, which has a different setting but a similar aim, namely to select a valid adjustment when complete knowledge of the causal structure is absent. They propose a selection criterion that includes all the variables that are either cause of treatment or outcome or both and show that if any subset of the observed variables is a valid adjustment set, then this set is a valid adjustment set. We need skeleton of the causal graph and do not assume any causal relation among the variables.

\textbf{Assumptions}. Some of our results are subject to certain graphical restrictions. In this work, we focus on one treatment and one outcome, with a known skeleton. There is no prior work on this topic on estimating valid adjustment sets by sampling DAGs, and the difficulty lies in the complexity and the large number of potential valid adjustment sets. 

The assumption of a known skeleton is reasonable because practically, most mistakes in graph learning algorithms come from getting the direction of edges wrong, not from missing the edges themselves \citep{tsamardinos2006max,mwebaze2010fast}; the direction of the edges often relies on detecting associations between variables. In addition, in some constraints-based methods, the skeleton is learned first and edge directions are added later, making the assignment of directions more error prone. In real-world situations, experts may know which variables are directly connected, but may not be sure about the direction of the relationship. We implement the choice of using an estimated skeleton from GES \citep{chickering2002optimal} in our algorithms. 

The intuition behind our approach is that similar graphs can possibly admit the same valid adjustment sets, and we can use some sampling technique to gather graphs that share some similarity to the true graph. We will now use some experimental and numerical results to demonstrate this motivation more clearly. The \emph{Structural Hamming Distance} (SHD) is a measure of the similarity of the graphs, which is defined as the number of differences in edge marks in two graphs. For each graph size $n \in \{6,7,8,9\}$ 100 repeats of the following procedure were made. First, a baseline DAG of size $n$ was generated with two nodes dedicated to treatment $T$ and result $Y$. Then another 300 DAGs of the same size were randomly generated and the SHD to the baseline DAG was computed. For each of the 300 DAGs, all valid adjustment sets were calculated and the proportion of such sets that was also valid in the baseline DAG was recorded as `precision’. The results (Figure \ref{fig: precision plot}) show that the more similar the DAGs are, as indicated by a lower SHD, the more similar the sets of valid adjustment sets are, as indicated by a higher precision.

\begin{figure}
    \centering
    \includegraphics[width=0.6\linewidth]{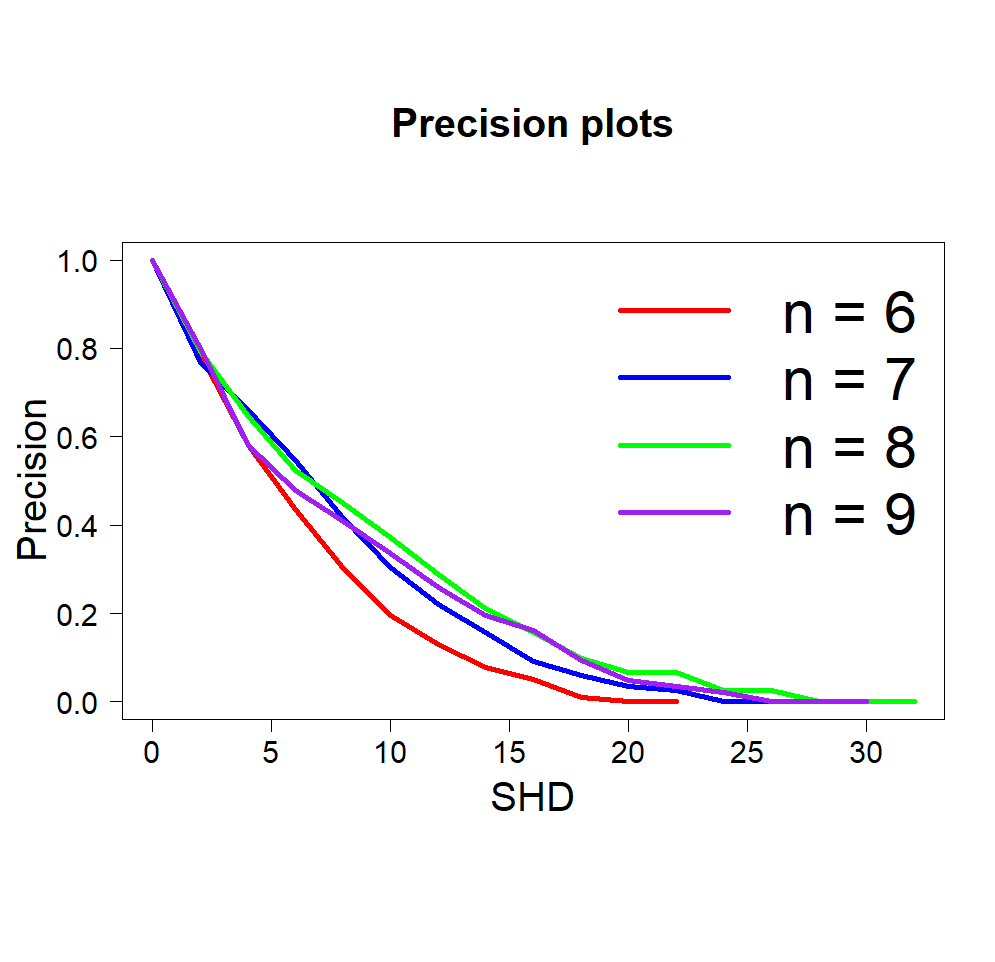}
    \caption{Precision plot showing that similar graphs (in terms of SHD) tend to have similar valid adjustment sets.}
    \label{fig: precision plot}
\end{figure}

Motivated by this intuition, we deduce theoretical results  about the stability of valid adjustment sets when the graph changes  in Section \ref{sec:adj change when graph change}, and derive an algorithm to find valid adjustment sets when only the skeleton of a DAG is known in Section \ref{sec:experiment}.

\section{Preliminaries}\label{sec:definition}

\subsection{Definitions}
As this paper focuses on finding valid adjustment sets that are robust to graph misspecification, here we recall the definition of a valid adjustment set, as well as other graphical notions used to define our procedure.

A graph $\G$ consists of a set of vertices $\mathcal{V}$ and a set of \emph{directed} edges ($\to$)  $\mathcal{E}$ containing pairs of distinct vertices. For an edge in $\mathcal{E}$ connecting the vertices $a$ and $b$, we say that these two vertices are the \emph{endpoints} of the edge. The graphs in this paper are \emph{simple} (that is, there is at most one edge between any pair of vertices).

A \emph{path} of length $k$ is an alternating sequence of $k+1$ distinct vertices $v_{i}$ and edges connecting $v_i$ and $v_{i+1}$ for $i = 0,\dots, k-1$. A path is \emph{directed} if its edges are all directed and point from $v_i$ to $v_{i+1}$. A \emph{directed cycle} is a directed path of length $k \geq 1   $ plus the edge $v_k \rightarrow v_0$, and a graph $\G$ is \emph{acyclic} if it has no directed cycle. A \emph{graph} $\G$ is called an \emph{directed acyclic graph} (DAG) if it is \emph{acyclic}. 

For a vertex $v$ in a DAG $\G$, we define the following sets:
\begin{align*}
\pa_{\G}(v) &= \{w: w \rightarrow v \text{ in } \G\};\\
\an_{\G}(v) &= \{w: w \rightarrow \cdots \rightarrow v \text{ in } \G \text{ or } w=v\}; \\
\de_{\G}(v) &= \{w: v \rightarrow \cdots \rightarrow w \text{ in } \G \text{ or } w=v\}.
\end{align*}
They are referred to as the \emph{parents}, \emph{ancestors}, 
 \emph{descendants} of $v$, respectively. These operators are also defined disjunctively for a set of vertices $W \subseteq \mathcal{V}$. For example $\pa_{\G}(W) = \bigcup_{w \in W} \pa_{\G}(w)$. We sometimes ignore the subscript if the graph we refer to is clear, for example $\an (v)$ instead of $\an_{\G}(v)$.

A \emph{topological ordering} is an ordering on the vertices such that if $w \in \an_\G(v)$ then $w$ precedes $v$ in the ordering. There might be several topological orderings for any single graph.

To define the criterion for a set being a valid adjustment set, we need the following concept.

\begin{definition}
For a DAG $\G$, let 
 \begin{align*}
\cn(\G,x,y) &:= \{z: \exists \text{ a directed path in } \G \text{ from } x \text{ to } y \\
& \quad \quad \quad \quad \text{ and } z \text{ is on the path} ; z \neq x \} ;\\
\forb(\G,x,y) &:= \de(\cn(\G,x,y)) \cup X.
\end{align*}

\end{definition}

\begin{definition}\label{ref: valid adjustment set}
A set $A$ is a valid adjustment set in a DAG $\G$ if:
\begin{align}
    (i&) \quad A \cap \forb(\G,x,y) = 
    \emptyset,\label{criteria 1}\\
    (ii&) \quad \text{every non-causal path from } x \text{ to } y \text{ is blocked by } A. \label{criteria 2}
\end{align}    
\end{definition}

In \citet{van2019separators},  an equivalent definition to Definition \ref{ref: valid adjustment set} is proved, which we list here as Definition \ref{def:equiv def for adj set} and which will be used later for convenience.

\begin{definition}
For a DAG $\G$, let $\G^{pd}_{xy}$ be the DAG created by removing the directed edges of $x$ to any of its children, which are also ancestors of $y$.    
\end{definition}

\begin{definition}\label{def:equiv def for adj set}
A set $A$ is a valid adjustment set in a DAG $\G$ if:
\begin{align}
    (i&) \quad A \cap \forb(\G,x,y) = 
    \emptyset,\\
    (ii&) \quad x \text{ is d-separated from } y \text{ by } A \text{ in } \G^{pd}_{xy}. 
\end{align}  
\end{definition}

To facilitate the theory developed in Section \ref{Sec: Adjustment sets}, we define two more sets.
\begin{definition} 
For a DAG $\G$, let
 \begin{align*}
 \rch(\G,x,y) &:= \{z: \exists \text{ a path in the skeleton of } \G \text{ from } \\ & \quad \quad \quad \quad x \text{ to } y \text{ and } z \text{ is on the path}\} ;\\
 \clr(\G,x,y) &:= \{\text{all colliders on any path from } x \text{ to }  y \text{ on } \G_{\rch(\G,x,y)}\}.
\end{align*} 
\end{definition}

We sometimes do not write $x,y$ as they remain unchanged throughout the paper. For example, the above two sets can be written as $\rch(\G)$ and $\clr(\G)$.

Finally, we include the \emph{optimal} adjustment set, which was proven by \citet{henckel2022graphical} and \citet{smucler2022efficient} to be `optimal' in the sense that it is the most \emph{efficient} valid adjustment set in the linear Gaussian model and the non-parametric causal model, respectively. In a nutshell, this means that regressing on the optimal set leads to the lowest variance of the estimated treatment effect among all valid adjustment sets (for a complete definition of efficiency in this setting, see \citet{henckel2022graphical} and \citet{smucler2022efficient}). We will compare the adjustment sets found by our method to the optimal adjustment set in Section \ref{sec:listingefficient}.

\begin{definition}\label{def:optimaladjustmentset}
For a DAG $\G$, let the optimal adjustment set be
\begin{align*}
\opadj(\G,x,y) := \pa(\cn(\G,x,y)) \setminus \forb(\G,x,y).
\end{align*}
\end{definition}

\subsection{Bayesian Set-Up}\label{Sec:set up}

Adopting a Bayesian framework, let $\mathcal{D}$ be the data and $\mathcal{M}$ be a probabilistic model (prior) on the graphs. We assume that the skeleton is known, and take as prior a uniform distribution on the topological ordering over the set of vertices conditioned on  $x$ preceding $y$. Given a graph $\mathcal{G}$, the data are generated via a standard process, for example a Gaussian linear model described in Section \ref{sec:experiment}, which allows us to compute the BIC \citep{koller2009}.

Let $\mathcal{P}$ denote the power set over the vertex set $\mathcal{V}$. Furthermore, let $P_{\mathcal{P} \mid \mathcal{G}, \mathcal{M}}$ be a vector of probabilities on subsets of variables where for each entry, denoted by $ P_{A}$, is the posterior probability of the set $A$ being a valid adjustment set given the data and prior on the graphs. This probability can be computed as follows:
$$
P_{A} = \sum_{G} I_{(A \text{ valid in }G)} P(G \mid \mathcal{D}, \mathcal{M}).
$$

The indicator function can be checked using graphical criteria in $O(n)$ time \citep{van2019separators}, and the latter posterior probability can be approximated via sampling. 

What we are interested in is the following. Given any set $A$, one can approximate its posterior probability  of being valid (relatively) quickly by sampling. If we do not have a specific target set or if we wish to find more sets that are likely to be valid, a naive approach would be to try to compute the whole vector $P_{\mathcal{P} \mid \mathcal{G}, \mathcal{M}}$. This is certainly inefficient and incomputable when the number of nodes grows. 

This paper shows that when the sampled graph $\G$ is changed to $\G'$ by some local change on a node, the indicator function can be checked faster under certain conditions. In addition, we provide techniques to narrow down the search range, avoiding computing the entire vector while maintaining the accuracy and efficiency of the adjustment sets.

\subsection{Why not the bootstrap?}\label{sec: no boostrap }

An obvious question arises as to whether we should apply DAG learning algorithms to bootstrapped data sets. Our approach has two advantages over bootstrapping: computational efficiency and uncertainty quantification.

First, checking validity of sets in the DAGs returned by bootstrap is less efficient than our method as one needs to examine each set on each DAG separately and independently while our sampling technique allows us to quickly update the validity of sets.

Second, a Bayesian method gives us a useful measure of uncertainty: the marginal distribution of the probability of each set being valid. This measure of uncertainty can, in future work, be propagated to the final outcome analysis so that it will reflect the full uncertainty of the analysis.

\section{Adjustment sets when the graph changes}\label{Sec: Adjustment sets} \label{sec:adj change when graph change}

We present the main results on how a local change on a node in a graph affects the validity of some (or all) adjustment sets.  The results from this section lead to a reduction in the number of sets that need to be checked for validity, as discussed in more detail in Section \ref{sec:experiment}. An additional result, not directly needed for our algorithm, is presented as Proposition \ref{prop:precedingtopological} in the Appendix.

\subsection{Insignificant nodes }\label{sec:insignificant nodes}

In this section, we will show that for certain nodes, if only their neighbours are changed when we change or alter its position in the topological ordering, then any valid adjustment set remains valid after such a change. We can thus obtain the validity of adjustment sets when these nodes appear to be the changed node. An example of a change of the position of a node in the topological ordering would be as follows. Suppose that the topological ordering is $a,b,c,d,x,e,y$ and we alter one vertex, say $b$, then one possible new ordering would be $a,c,d,x,b,e,y$.

\begin{proposition}\label{prop:invariant adjustent set}
    Consider two DAGs $\G$ and $\G'$ such that in topological orderings only one node $z$ is altered. If the following are the same in both graphs: $$\G_{\rch(\G)}=\G^{'}_{\rch(\G^{'})}, \clr(\G)=\clr(\G^{'}), \forb(\G) = \forb(\G^{'}),$$ and 
    $$\de_{\G}(c) = \de_{\G^{'}}(c) \text{ for every c} \in \clr(\G) , $$then $A$ is a valid adjustment set in $\G$ if and only if it is valid in $\G'$.
\end{proposition}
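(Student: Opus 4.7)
The plan is to verify the two criteria of Definition~\ref{ref: valid adjustment set} separately and check that each is preserved in passing from $\G$ to $\G'$. Criterion (i), $A \cap \forb(\G,x,y) = \emptyset$, is immediate from the hypothesis $\forb(\G) = \forb(\G')$, so the real content lies in showing that $A$ blocks every non-causal path from $x$ to $y$ in $\G$ if and only if it does so in $\G'$.

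The central observation is that every path from $x$ to $y$ in a DAG has all of its vertices in $\rch$, directly from the definition of that set. Hence the set of $x$-to-$y$ paths in $\G$ coincides with the set of such paths in the induced subgraph $\G_{\rch(\G)}$, and similarly for $\G'$. Since $\G_{\rch(\G)} = \G'_{\rch(\G')}$ by hypothesis, the candidate paths are identical in the two graphs, and on each such path the edge directions agree. It follows immediately that ``causal vs.\ non-causal'' and ``collider vs.\ non-collider at each internal vertex'' are properties shared between $\G$ and $\G'$.

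With a common non-causal path $\pi$ in hand, I would then check the two standard blocking conditions. The non-collider condition depends only on $\pi$ and on $A$ and is therefore independent of the ambient graph. The collider condition requires $\de(c) \cap A = \emptyset$ for every collider $c$ on $\pi$; since every such $c$ lies in $\clr(\G) = \clr(\G')$, the remaining hypothesis $\de_\G(c) = \de_{\G'}(c)$ makes this condition equivalent in the two graphs. Combined with (i), this yields the proposition.

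The main subtlety to state carefully is the first observation, that $x$-to-$y$ paths never leave $\rch$; this is essentially a one-line unwinding of the definition of $\rch$, but it is what makes the four stated hypotheses sufficient in the first place. It is also worth noting that the contextual hypothesis ``only one node $z$ is altered in the topological ordering'' never enters the argument directly: it is the motivating scenario in which the four structural equalities on $\G_{\rch(\G)}$, $\clr$, $\forb$, and $\de$ of colliders can plausibly hold simultaneously, but the proof itself only uses those equalities.
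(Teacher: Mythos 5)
Your proposal is correct and follows essentially the same route as the paper's proof: both reduce the problem to observing that the inventory of $x$--$y$ paths and their orientations is fixed by $\G_{\rch(\G)}=\G'_{\rch(\G')}$, so the only mechanism by which blocking status could differ is a collider whose descendant set changes, which the last hypothesis rules out. The only difference is cosmetic: you argue directly from Definition~\ref{ref: valid adjustment set} and spell out why every $x$--$y$ path lives inside the induced subgraph on $\rch$, whereas the paper routes the same argument through the equivalent Definition~\ref{def:equiv def for adj set} and the assertion $\G^{pd}_{xy} = \G^{'pd}_{xy}$, leaving that path-preservation step implicit.
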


\begin{proof}
    Note that $\G^{pd}_{xy} = \G^{'pd}_{xy}$. WLOG, let $A$ be a valid adjustment set in $\G$. Suppose that there is a d-connecting path from $x$ to $y$ in $\G^{'pd}_{xy}$, given $A$. Now, the path is the same in $\G'$. The only mechanism through which it could become open is if some colliders $C$ become open when altering $z$, which means $A \cap \de_{\G'}(C) \neq \emptyset$ and $A \cap \de_{\G}(C) = \emptyset$, which is a contradiction to the last condition.
\end{proof}

\begin{lemma}\label{lemma: good node}
 Consider the setting in Proposition \ref{prop:invariant adjustent set}. Suppose $z$ is a node not in 
 \begin{equation} \label{eq:conditioninsignificant}
 \rch(\G) \cup \forb(\G) \cup \de(\clr(\G)) \cup \pa(\forb(\G) \cup \de(\clr(\G))).
 \end{equation}
 Then the conditions in Propositions \ref{prop:invariant adjustent set} are satisfied.   
\end{lemma}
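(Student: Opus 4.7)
The plan is to verify each of the four conditions in Proposition \ref{prop:invariant adjustent set} in turn, using throughout the observation that since the skeleton is fixed and only $z$'s position in the topological ordering is altered, the only edges that can differ between $\G$ and $\G'$ are those incident to $z$. Because $\rch$ is determined by the skeleton, $\rch(\G) = \rch(\G')$; and since $z \notin \rch(\G)$, no edge of the induced subgraph $\G_{\rch(\G)}$ has $z$ as an endpoint. All remaining edges agree between $\G$ and $\G'$, so $\G_{\rch(\G)} = \G'_{\rch(\G')}$, giving condition 1. Condition 2 follows immediately, since $\clr$ is read off the induced subgraph on $\rch$.

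For condition 3, I would first note $\cn(\G) = \cn(\G')$: any directed $x$--$y$ path avoiding $z$ is present in both graphs, whereas a directed $x$--$y$ path visiting $z$ would place $z$ on a skeleton $x$--$y$ path, contradicting $z \notin \rch(\G)$. Next I would show $z \notin \de_\G(\cn) \cup \de_{\G'}(\cn)$. In $\G$ this is immediate from $\de_\G(\cn) \subseteq \forb(\G)$ and $z \notin \forb(\G)$. In $\G'$, I assume for contradiction a directed path $u \to w_1 \to \cdots \to w_{k-1} \to z$ with $u \in \cn$; the subpath $u \to \cdots \to w_{k-1}$ does not visit $z$ as a vertex, so every edge on it is non-incident to $z$ and hence present in $\G$, placing $w_{k-1} \in \de_\G(u) \subseteq \forb(\G)$. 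Then $z \notin \forb(\G) \cup \pa(\forb(\G))$ rules out either orientation of the skeleton edge $z$--$w_{k-1}$ in $\G$, except in the corner case $w_{k-1} = x$ handled below. Once $z$ is excluded from $\de_\G(\cn) \cup \de_{\G'}(\cn)$, any alleged mismatch $\de_\G(u) \neq \de_{\G'}(u)$ for $u \in \cn$ would require a directed path from $u$ differing between the two graphs, hence using an edge incident to $z$, hence visiting $z$, placing $z$ in one of the descendant sets---contradiction. Therefore $\forb(\G) = \de_\G(\cn) \cup \{x\} = \de_{\G'}(\cn) \cup \{x\} = \forb(\G')$. Condition 4 follows by an identical template with $\clr$ replacing $\cn$ and $\de(\clr(\G))$, $\pa(\de(\clr(\G)))$ replacing $\forb(\G)$, $\pa(\forb(\G))$; no corner case arises since $x \notin \clr$.

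The main obstacle is the $w_{k-1} = x$ case in condition 3: the excluded set in (\ref{eq:conditioninsignificant}) does not forbid $x$ from being a skeleton neighbour of $z$ with $x \to z$ in $\G$ and $z \to x$ in $\G'$, so the local orientation argument does not yield a contradiction. To resolve it, I would invoke acyclicity of $\G'$: any $u \in \cn$ is reached from $x$ by a directed path in $\G$ that avoids $z$ (as $z \notin \rch$), and this path is preserved in $\G'$, giving $u \in \de_{\G'}(x) \setminus \{x\}$; combined with $x = w_{k-1} \in \de_{\G'}(u)$ from the assumed path, this produces a directed cycle in $\G'$, contradicting that $\G'$ is a DAG.
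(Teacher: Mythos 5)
Your proof is correct and follows essentially the same route as the paper's: conditions 1--2 from the fixed skeleton plus $z \notin \rch(\G)$, and conditions 3--4 by arguing that any discrepancy in a descendant set would force a ``first altered edge'' incident to $z$, contradicting $z \notin \forb(\G) \cup \de(\clr(\G)) \cup \pa(\forb(\G) \cup \de(\clr(\G)))$. You spell out details the paper leaves implicit (notably the $\forb$ equality, which the paper dispatches in one line), and your $w_{k-1}=x$ corner case is in fact vacuous since $w_{k-1} \in \de_\G(\cn(\G))$ while $x \notin \de_\G(\cn(\G))$ by acyclicity --- which is exactly the acyclicity argument you give to dispose of it.
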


\begin{proof}
Trivially, $\rch(\G) = \rch(\G')$. Then we show that $$
    \forb(\G) = \forb(\G').
    $$
    Note that any node on any directed path from $x$ to $y$ is in $\rch(\G)$, so any directed path from $x$ to $y$ is preserved. For similar reasons, there is no new directed path in $\G'$. Now, forbidding $z$ in $\forb(\G) \cup \pa(\forb(\G))$ proves the equality.    

    It is clear that $\clr(\G)$ remains the same because $z \notin \rch(\G)$ so $\G_{\rch(\G,x,y)} = \G'_{\rch(\G,x,y)}$. Suppose $\de_{\G}(c)  \neq \de_{\G'}(c)$ for some node $c$ in $\clr(\G)$. Consider any directed path from $c$ to any node in $\de_{\G'}(c)$ but not $\de_{\G}(c)$. Let $d$ be the first node that is not in $\de_{\G}(c)$. That means that the edge connecting $d$ is altered, so $z$ is the end point of this edge. But then $z$ is in $\de(\clr(\G)) \cup \pa(\de(\clr(\G)))$.   
\end{proof}

The condition in Lemma \ref{lemma: good node}
 is strictly weaker than those in Proposition \ref{prop:invariant adjustent set} as one can easily construct an example where $z \in \de(\clr(\G))$ but $\de_{\G}(c)$ remain unchanged for every $c \in \clr(G)$.

\begin{corollary}\label{cor:separatesets}
Let $z$ be an altered node in $\G$. Then if $z$ is not in :
\begin{enumerate}
    \item $\rch(\G)$, then $\G_{\rch(\G)}$ and $\clr(\G)$ remain unchanged.
    \item $\rch(\G) \cup \forb(\G) \cup \pa(\forb(\G))$, then  $\forb(\G)$ remain unchanged.
    \item $\rch(\G) \cup \de(\clr(\G)) \cup \pa(\de(\clr(\G)))$, then $\de_\G(c)$ remains unchanged for every $c\in\clr(\G)$.
\end{enumerate}
\end{corollary}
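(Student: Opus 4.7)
The plan is to exploit the observation that altering the position of a single node $z$ in the topological ordering (with the skeleton fixed) can only re-orient edges incident to $z$; every other edge, path, or induced subgraph not touching $z$ is preserved. Each of the three claims then reduces to checking that the relevant objects can be ``rebuilt'' without looking at $z$, and to a short contradiction argument identifying where $z$ would have to live if any of them changed.

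\textbf{Part 1.} Since $\rch(\G)$ depends only on the skeleton and therefore coincides with $\rch(\G')$, the first task is to argue that $\G_{\rch(\G)} = \G'_{\rch(\G)}$ when $z \notin \rch(\G)$. Under that hypothesis no edge of $\G_{\rch(\G)}$ is incident to $z$, so none of its edges can be flipped by the re-positioning; the induced subgraphs thus coincide. Because $\clr(\G)$ is read off purely from this induced subgraph, it inherits the equality.

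\textbf{Part 2.} I would first invoke Part~1 to get $\cn(\G) = \cn(\G')$, noting that $\cn \subseteq \rch$. Then I would argue by contradiction that $\de(\cn(\G)) = \de(\cn(\G'))$: if some $w$ lies in the symmetric difference, trace a directed path from a node of $\cn$ to $w$ in whichever graph contains it, and locate the first edge along the path that was re-oriented. Such an edge must be incident to $z$, forcing $z$ itself to be either in $\de(\cn(\G)) = \forb(\G) \setminus \{x\}$ or in $\pa(\forb(\G))$, contradicting the hypothesis. Adding back $\{x\}$ gives $\forb(\G) = \forb(\G')$.

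\textbf{Part 3.} The argument is the same template as in Lemma~\ref{lemma: good node}, carried out under the weaker hypothesis isolated here. Part~1 again secures $\clr(\G) = \clr(\G')$. Fix $c \in \clr(\G)$ and suppose for contradiction that $\de_\G(c) \neq \de_{\G'}(c)$; pick a directed path in one graph witnessing a node not in the descendants computed in the other, and let $d$ be the first vertex along that path outside the common part. The last edge traversed to reach $d$ must have been re-oriented, so it is incident to $z$; depending on which endpoint $z$ is, this places $z$ in $\de(\clr(\G))$ or in $\pa(\de(\clr(\G)))$, contradicting the hypothesis.

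The only real obstacle is pedantic bookkeeping in Parts~2 and~3: at the ``first altered edge'' step one must consider both possibilities for which endpoint of that edge is $z$, and correspondingly whether $z$ lands in the ``descendant'' set or its ``parent'' set. Once that case split is made cleanly, each part is a one-line contradiction.
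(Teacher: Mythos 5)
Your proposal is correct and follows essentially the same route as the paper: the paper states this corollary without a separate proof because each part is extracted from the corresponding step of the proof of Lemma~\ref{lemma: good node} (invariance of $\G_{\rch(\G)}$ since no re-oriented edge is incident to a node outside $\rch(\G)$; the ``first altered node $d$ on a directed path'' contradiction for $\forb$ and for $\de_\G(c)$), and your part-by-part argument reproduces exactly those steps under the weaker, part-specific hypotheses. The only addition you make is spelling out the endpoint case split at the first re-oriented edge, which the paper leaves implicit.
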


\subsection{Targeted adjustment sets}\label{sec:targeted sets}

As we stated in Corollary \ref{cor:separatesets}, the set \eqref{eq:conditioninsignificant} in Lemma \ref{lemma: good node} can also be considered separately, each leading to different sets remaining unchanged. The utility is that if only a part of the condition is satisfied, then we can still have partial results. We refer to the unchanged sets from Corollary \ref{cor:separatesets} as `targeted' adjustment sets. 


\begin{lemma}\label{lemma:good nodes}
    Suppose the altered node $z$ is not in $\rch(\G)$. If $$A \cap ((\forb(\G) \cup \forb(\G')) \setminus (\forb(\G) \cap \forb(\G'))) = \emptyset$$ and $$A \cap (\bigcup_{c \in \clr(\G)} ((\de_{\G}(c) \cup \de_{\G'}(c))) \setminus (\de_{\G}(c) \cap \de_{\G'}(c))) = \emptyset,$$ then $A$ is a valid adjustment set in $\G$ if and only if it is valid in $\G'$.
\end{lemma}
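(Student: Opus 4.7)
The plan is to apply the equivalent characterization from Definition~\ref{def:equiv def for adj set} and show that both of its clauses hold in $\G$ if and only if they hold in $\G'$. The first move is to unpack the two symmetric-difference hypotheses: since no $a \in A$ lies in exactly one of $\forb(\G)$ and $\forb(\G')$, for every $a \in A$ membership in $\forb(\G)$ coincides with membership in $\forb(\G')$, and likewise $a \in \de_\G(c)$ if and only if $a \in \de_{\G'}(c)$ for every $c \in \clr(\G)$. In other words $A \cap \forb(\G) = A \cap \forb(\G')$ and $A \cap \de_\G(c) = A \cap \de_{\G'}(c)$, which immediately settles clause (i) of Definition~\ref{def:equiv def for adj set} in $\G$ versus $\G'$.

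For clause (ii) I would first collect the graph-theoretic invariances induced by $z \notin \rch(\G)$. By Corollary~\ref{cor:separatesets}(1), $\rch(\G) = \rch(\G')$, $\G_{\rch(\G)} = \G'_{\rch(\G')}$, and $\clr(\G) = \clr(\G')$. Since only $z$'s position in the topological ordering is altered, every edge with neither endpoint equal to $z$ carries the same orientation in $\G$ and $\G'$. I then need the analogous agreement for the mutilated graphs: the edges removed to form $\G^{pd}_{xy}$ are the edges $x \to w$ with $w$ a child of $x$ that is an ancestor of $y$, and these do not touch $z$ (as $z \neq x$), while the ancestry of $y$ for such a $w$ is witnessed by a directed path from $w$ to $y$ whose vertices lie on a directed path from $x$ to $y$ and hence inside $\rch$, which excludes $z$. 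So the removed edges coincide in both graphs, and $\G^{pd}_{xy}$ and $\G^{'pd}_{xy}$ agree on every edge not touching $z$.

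With this in hand I would argue clause (ii) by contradiction, as in Proposition~\ref{prop:invariant adjustent set}. Suppose there is a d-connecting path $P$ from $x$ to $y$ in $\G^{'pd}_{xy}$ given $A$. Its vertices all lie in $\rch(\G')$, so $z \notin P$ and every edge on $P$ avoids $z$; by the preceding paragraph, the same oriented path exists in $\G^{pd}_{xy}$, with identical colliders and non-colliders. Each collider $c$ on $P$ sits on a path from $x$ to $y$ inside $\G'_{\rch(\G')}$, so $c \in \clr(\G') = \clr(\G)$, and the identity $A \cap \de_\G(c) = A \cap \de_{\G'}(c)$ then forces $c$ to be open with respect to $A$ in $\G^{pd}_{xy}$ if and only if it is open in $\G^{'pd}_{xy}$; the non-collider blocking condition is purely set-theoretic. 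Hence $P$ is also d-connecting in $\G^{pd}_{xy}$, contradicting validity of $A$ in $\G$. The reverse direction is symmetric.

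The main obstacle I expect is the bookkeeping step that $\G^{pd}_{xy}$ and $\G^{'pd}_{xy}$ agree on all edges not touching $z$: specifically, that ``$w$ is an ancestor of $y$'' is preserved under the alteration, which in turn rests on the observation that any witnessing directed path must remain inside $\rch$ and so cannot pass through $z$. Once that invariance is locked down, the rest is a mild extension of the path-tracing argument in Proposition~\ref{prop:invariant adjustent set}, with the symmetric-difference hypotheses supplying exactly the agreement on $A$ required to push the d-separation analysis through.
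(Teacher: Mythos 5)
Your proposal follows essentially the same route as the paper's proof: use the forbidden-set symmetric-difference hypothesis to settle clause (i), observe that $z \notin \rch(\G)$ preserves the causal paths and hence the relevant structure of $\G^{pd}_{xy}$, and then derive a contradiction from a d-connecting path whose only possible change in status comes from a collider $c$ with $A \cap \de_{\G}(c) \neq A \cap \de_{\G'}(c)$. Your treatment is somewhat more careful than the paper's (which asserts $\G^{pd}_{xy} = \G^{'pd}_{xy}$ outright, whereas you correctly note only that the two mutilated graphs agree on edges not touching $z$ and that the connecting path avoids $z$), but this is a refinement of the same argument rather than a different one.
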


\begin{proof}
    Suppose $A$ is a valid adjustment set in $\G$. Because $A \cap (\forb(\G) \setminus \forb(\G')) = \emptyset$ and $A \cap (\forb(\G') \setminus \forb(\G)) = \emptyset$, we have $A \cap \forb(\G') = \emptyset$.

    As the altered node $z$ is not in $\rch(\G)$, no directed path from $x$ to $y$ is changed and there is no new directed path. Therefore, only the descendant of the nodes on the directed paths could change. Therefore, note that $\G^{pd}_{xy} = \G^{'pd}_{xy}$. If $A$ is not a valid adjustment set in $\G'$, then there is a path that is open in $\G^{pd}_{xy}$ but not in $\G^{'pd}_{xy}$. Consider any path between $x$ and $y$ that is open in $\G^{pd}_{xy}$ but not in $\G^{'pd}_{xy}$. The only reason why the connectivity changes is that some colliders $c$ are conditioned in $\G^{pd}_{xy}$ but not in $\G^{'pd}_{xy}$. Therefore, $A \cap \de_{\G}(c) \neq \emptyset$ and $A \cap \de_{\G'}(c) = \emptyset$, but then this contradicts the given conditions. 
\end{proof}

If $\forb(\G)$ is unchanged after reordering $z$, then for any $A$,  $A \cap ((\forb(\G) \cup \forb(\G')) \setminus (\forb(\G) \cap \forb(\G'))) = \emptyset$, and we only need to check those sets that contain the changed descendants of the colliders. Similarly, if the descendants of colliders stay the same, it is sufficient to check those sets that contain the changed forbidden nodes. If both of them are unchanged, the conditions in Proposition \ref{prop:invariant adjustent set} are satisfied.






\subsection{Listing efficient valid adjustment sets}\label{sec:listingefficient}

Recall the definition of the optimal adjustment set $\opadj(\G,x,y)$ in Definition \ref{def:optimaladjustmentset}. Essentially, Proposition \ref{prop:efficient adjustment set} says that we can push valid adjustment sets towards the optimal adjustment set and as long as it remains a valid adjustment set, it is a more efficient adjustment set. 

\begin{proposition}\label{prop:efficient adjustment set}
Suppose $A$ is a valid adjustment set, then for any $A_O \subseteq \opadj(\G,x,y)$ such that $A_O$ is a valid adjustment set and $(\de(A) \cap \opadj(\G,x,y)) \subseteq A_O$, $A_{O}$ is a more efficient valid adjustment set than $A$.
\end{proposition}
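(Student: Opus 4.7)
My plan is to invoke the graphical efficiency comparison theorems of \citet{henckel2022graphical} and \citet{smucler2022efficient}, which characterize when one valid adjustment set is asymptotically at least as efficient as another. Roughly, for two valid adjustment sets $Z$ and $Z'$, $Z'$ is more efficient than $Z$ provided the symmetric difference satisfies appropriate d-separation conditions, ensuring that $Z \setminus Z'$ carries no residual information about $y$ given $Z' \cup \{x\}$ while $Z' \setminus Z$ consists of beneficial ancestors of $y$. I would apply this with $Z = A$ and $Z' = A_O$, possibly stepping through the intermediate set $A \cup A_O$ and checking that each step preserves validity and does not increase the asymptotic variance.

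To verify the hypotheses I would argue as follows. The inclusion $A_O \subseteq \opadj(\G,x,y) = \pa(\cn(\G,x,y)) \setminus \forb(\G,x,y)$ guarantees that every node in $A_O \setminus A$ is a parent of a node on the causal chain and is not forbidden; by the ``add an ancestor of $y$'' direction of the comparison theorem, adjoining such nodes to a valid adjustment set does not increase variance. For the removal direction, I would fix any $a \in A \setminus A_O$ and show that $a$ is d-separated from $y$ by $A_O \cup \{x\}$ in the relevant back-door subgraph. The hypothesis $(\de(A) \cap \opadj(\G,x,y)) \subseteq A_O$ is the key tool here: any directed path from $a$ toward $y$ that enters the causal chain $\cn(\G,x,y)$ must pass through a parent lying in $\opadj(\G,x,y)$; such a parent is a descendant of $a$ and hence in $\de(A) \cap \opadj(\G,x,y)$, so by assumption it is already in $A_O$, blocking the path.

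The main obstacle will be controlling the non-directed paths from $a$ to $y$, especially those that pass through colliders which could be activated once we condition on $A_O \cup \{x\}$ rather than on $A \cup \{x\}$. Here I would rely on the assumed validity of $A_O$ to dispose of paths that sit entirely outside the causal-chain structure (such paths are blocked by $A_O$ as part of criterion (ii) of Definition \ref{ref: valid adjustment set}), and on a case split according to whether the path leaves $a$ via an outgoing or incoming edge and to the first node at which it meets $\opadj(\G,x,y)$ or $\cn(\G,x,y)$. The technical heart of the proof lies in combining the descendants condition with the validity of both $A$ and $A_O$ into a single blocking argument; once this is in place, the efficiency conclusion follows directly from the cited comparison theorems.
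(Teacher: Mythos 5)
Your overall strategy coincides with the paper's: both reduce the claim to Theorem 3.4 of \citet{henckel2022graphical} and set out to verify the two d-separation conditions $y \indep A \setminus A_O \mid A_O \cup \{x\}$ and $x \indep A_O \setminus A \mid A$, and you correctly locate the role of the hypothesis $(\de(A)\cap\opadj(\G,x,y))\subseteq A_O$ in blocking directed paths from a removed node into the causal chain. However, the proposal stops short of a proof in two places. First, the ``addition'' condition $x \indep A_O \setminus A \mid A$ is not established: you replace it with an informal ``adding an ancestor of $y$ is harmless'' principle, but that principle is exactly what the d-separation condition encodes, and proving it here requires a concrete argument --- if some $z \in A_O \setminus A$ were d-connected to $x$ given $A$, one concatenates that path with the directed path from $z$ (a parent of a mediator) to $y$, all of whose intermediate nodes are forbidden and hence outside $A$, producing an open non-causal path from $x$ to $y$ given $A$ and contradicting the validity of $A$. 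Your alternative of stepping through the intermediate set $A \cup A_O$ additionally requires showing that $A \cup A_O$ is itself a valid adjustment set, which you do not do.

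Second, and more seriously, the ``removal'' condition $y \indep A \setminus A_O \mid A_O \cup \{x\}$ is only argued for directed paths from $z \in A \setminus A_O$ to $y$; the non-directed paths, which occupy most of the paper's proof, are explicitly deferred as ``the technical heart.'' The tool you propose for them --- that validity of $A_O$ disposes of paths lying outside the causal-chain structure --- does not apply directly, because criterion (ii) of Definition \ref{ref: valid adjustment set} only concerns non-causal paths between $x$ and $y$, not paths between $y$ and a node of $A \setminus A_O$. The paper instead closes these cases with forbidden-node arguments: any collider that would need to be activated on such a path is a descendant of $y$ or of a mediator, hence lies in $\forb(\G,x,y)$ and has no descendant in $A_O \cup \{x\}$ (since $A_O \cap \forb(\G,x,y) = \emptyset$ and $x$ cannot descend from such a node without creating a cycle); this is combined with a case split on the orientation of the last edge at $y$ and on whether the path passes through $x$ as a collider. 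Until that blocking argument is carried out, your proposal is an accurate outline of the paper's proof rather than a proof.
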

\begin{proof}
 Consider any $A_O$ that is a valid adjustment set. By Theorem 3.4 in \citet{henckel2022graphical}, it is sufficient to show that $y \indep A \setminus A_{O} \mid \{x\} \cup A_{O}$ and $x \indep A_{O} \setminus A \mid A$.  
    
To prove $x \indep A_{O} \setminus A \mid A$. Let $z \in A_{O} \setminus A$. Suppose that there exists a connecting path $p$ between $x$ and $z \in A_{O} \setminus A$ given A. The aim here is to construct a connecting non-causal path between $x$ and $y$ given $A$.

This path $p$ is not directed because $z \in \opadj(\G,x,y)$ is not a forbidden node. Since $z \in \opadj(\G,x,y)$, there exists a directed path from $z$ to $y$ and all nodes in the middle (if exists) are forbidden nodes which, including $z$ are then not in $A$ and therefore if we consider the joint path by the connecting path $p$ between $x$ and $z$, and the directed path from $z$ to $y$, this is a non-causal connecting path between $x$ and $y$ given $A$, contradicting the assumption that $A$ is a valid adjustment set.

To prove $y \indep A \setminus A_{O} \mid \{x\} \cup A_{O}$. Let $z \in A \setminus A_{O}$. Clearly, $z \notin \opadj(\G,x,y)$. Suppose that there exists a connecting path $p$ between $y$ and $z$ given $A_{O} \cup \{x\}$. Suppose that this path does not include $x$. This path cannot be directed. If it is directed from $y$ to $z$, then $z$ is a forbidden node. If it is directed from $z$ to $y$, then since $z$ is not a forbidden node and $y$ is on the end of the path, there is at least one node on the path, including $z$ itself, such that it is in $\opadj(\G,x,y))$. Then this path is blocked by conditioning on $A_{O}$. 

Now consider the last edge of the path $p$, which has $y$ as an endpoint. If it is going out of $y$, then there must be a collider which blocked as it and its descendants are forbidden nodes. If it is going into $y$, suppose that it is $q_1 \rightarrow y$. If $q_1$ is in $\opadj(\G,x,y))$ then we arrive at a contradiction. As we have shown that the path $p$ cannot be directed, there must be a node $q_{i+1}$ and $q_i, \cdot, q_1$ such that $q_{i+1} \leftarrow q_i \rightarrow ,\cdots, \rightarrow q_1 \rightarrow y$. If any of $q_1, \cdots, q_i$ is in $\opadj(\G,x,y))$, then we are done. Suppose not. in particular $q_i$ is not $\opadj(\G,x,y))$ and therefore it is on some causal path from $x$ to $y$. Consider the position of $z$, which is more left of $q_i$. If the part from $z$ to $q_i$ is directed from $q_i$ to $z$, then by definition $z$ is a forbidden node, which is a contradiction. If it is not a directed path, then there must be a collider, which, including its descendants is a forbidden node and hence this path $p$ is not open.

Suppose that any such path $p$ includes $x$. Then $x$ must be a collider in order for $p$ to be open. So the subpath $p'$ from $x$ to $y$ is non-causal. Again we consider the last edge involving $y$. It cannot going out of $y$ because if so, there must be a collider otherwise contradicting the assumption that $x$ proceeds $y$. This collider and its descendant are then forbidden nodes and also does not include $x$, so the path $p$ could not be open given $A_{O}$ and $x$. 

If there exist colliders on the subpath $p'$ that have $x$ as a descendant. Consider the collider that is closest to $y$ and the new path $p''$ by joint the directed path from the collider to $x$ and the subpath between $y$ and the collider. If there doesn't exist any such collider, then let $p''=p'$. 

Suppose that the last edge of $p$ is $q_1 \rightarrow y$. Consider the last node $q_{i+1}$ from $y$ on the path $p$ such that $q_{i+1} \leftarrow q_i \rightarrow ,\cdots, \rightarrow q_1 \rightarrow y$. Note that all these $q's$ is also on $p'$. By similar argument, if any of $q_1, \cdots, q_i$ is in $\opadj(\G,x,y))$, then we are done. Suppose not. In particular $q_i$ is not in $\opadj(\G,x,y))$ and therefore is on some causal path from $x$ to $y$. We consider any of the subpath of $p'$ and $p''$ from $x$ to $q_{i}$, which both includes $q_{i+1}$. If the path is directed, then it is a contradiction because the graph is acyclic. If there is a collider then it and its descendants are therefore forbidden nodes and so $p$ cannot be open given $A_{O} \cup \{x\}$.
\end{proof}

Proposition \ref{prop:efficient adjustment set} and its proof are analogous to Theorem 3.9 in \citet{henckel2022graphical}.

For any valid adjustment set $A$, there must exist at least one such $A_O$ as $\opadj(\G,x,y)$ clearly satisfies the conditions in Proposition \ref{prop:efficient adjustment set}.

 The result of Proposition \ref{prop:efficient adjustment set} means that we can list efficient valid adjustment sets by going through subsets of the optimal adjustment sets, which is a local procedure rather than going through all subsets of variables.

 Example \ref{example:pushing} shows that for a valid adjustment set $A$, merely pushing it towards its descendant in the optimal adjustment may result in an invalid adjustment set.

 \begin{example}\label{example:pushing}
  Consider the DAG $\G$ in Figure \ref{fig: invalid adjustment set by pushing towards descendants}.  $\opadj(\G,x,y) = \{4,8,9\}$ and let $A = \{6,7\}$. Note that $\de(A) \cap \opadj(\G,x,y) = \{4\}$, which is an invalid adjustment as the path $x \leftarrow 6 \rightarrow 5 \leftarrow 7 \leftarrow 8 \rightarrow y$ is open and non-causal between $x$ and $y$. We need an additional node $\{8\}$ such that $\{4,8\}$ fully blocks all the non-causal paths between $x$ and $y$ and is more efficient than $A$.
  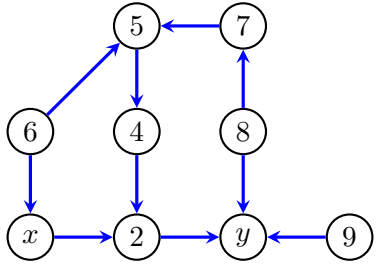
\begin{figure}
\centering
  \begin{tikzpicture}
  [rv/.style={circle, draw, thick, minimum size=6mm, inner sep=0.8mm}, node distance=14mm, >=stealth]
  \pgfsetarrows{latex-latex};
\begin{scope}
  \node[rv]  (1)            {$x$};
  \node[rv, right of=1] (2) {$2$};
  \node[rv, right of=2] (3) {$y$};
  \node[rv, above of=2] (4) {$4$};
  \node[rv, above of=4] (5) {$5$};
  \node[rv, above of=1] (6) {$6$};
  \node[rv, right of=5] (7) {$7$};
  \node[rv, right of=4] (8) {$8$};
  \node[rv, right of=3] (9) {$9$};
  
  \draw[->, very thick, blue] (1) -- (2);
  \draw[->, very thick, blue] (2) -- (3);
  \draw[->, very thick, blue] (4) -- (2);
  \draw[->, very thick, blue] (6) -- (1);
  \draw[->, very thick, blue] (6) -- (5);
  \draw[->, very thick, blue] (5) -- (4);
  \draw[->, very thick, blue] (7) -- (5);
  \draw[->, very thick, blue] (8) -- (7);
  \draw[->, very thick, blue] (8) -- (3);
  \draw[->, very thick, blue] (9) -- (3);
  \end{scope}
\end{tikzpicture}
\caption{Invalid adjustment set example }
\label{fig: invalid adjustment set by pushing towards descendants}
\end{figure}   
 \end{example}

\section{Algorithmic considerations}\label{sec:experiment}
\subsection{Main graphical assumption}

In this paper, we assume that the skeleton of the true DAG is known, so we are sampling directions of edges. There are several reasons to make this assumption, and we here further justify why this assumption is not too impractical. 

From a theoretical point of view, recovering the skeleton is a simpler problem and easier to approach and solve. Indeed, as we have shown, several results on invariance of valid adjustment sets when changing graphs can be derived rather directly. These results then bring us some insight on which types of graph changes would alter the validity of certain sets.

From a practical point of view, the errors of graph-learning algorithms often come from the wrong orientation of the edges rather than the wrong prediction of edges. Building a skeleton is easier than orienting the edge \citep{tsamardinos2006max,mwebaze2010fast}, which can depend on finding an association between variables. Also, in some constrain-based algorithms, the edge orientation procedure follows after learning the skeleton, so inherently edge orientation admits more mistakes potentially. In addition, in applications, experts could have ideas about which variables are directly related but are uncertain about the causal direction. 

There has been a lot of work on sampling DAGs, including partitioning MCMC by \citet{kuipers2017partition,sample-1,sample-2} . In this work, we will sample DAGs by sampling the topological orderings. By assuming a known and fixed skeleton, each topological ordering corresponds to a DAG. The way we sample the topological ordering is as follows:
\begin{itemize}
    \item[(i)] randomly select a node
    \item[(ii)] randomly re-position the selected node and ensure that $x$ still precedes $y$; if not, repeat the procedure.
\end{itemize}

There are several reasons why we choose this method. Skeletons are often easier to detect, as we explained earlier. Also by sampling topological orderings, it naturally corresponds to a DAG while if we sample by inverting edges, it may result in cyclic graphs, and thus we avoid the computational costs to check whether the graph is acyclic. Moreover, changing the position of a node in the topological ordering only changes the graph locally. That is, only the neighbours of the node (its parents and children) are changed, and it is therefore easier to check the validity of adjustment sets in certain situations, as we have seen in Section \ref{Sec: Adjustment sets}.

By changing the ordering of a vertex, we are flipping some collider/non-collider triple that involves the vertex. This is like the turning stage introduced by GIES \citep{hauser2012characterization} and in fact, if the given skeleton is true, this has been proved to recover the true Markov equivalence class consistently in the limit of infinite data size \citep{linusson2022edges,linusson2023greedy}.

Let $\tau$ denote topological ordering and $K$ be a given skeleton. We consider a Markov chain with a stationary distribution proportional to the true ordering $P(\tau \mid \mathcal{D},K)$ (we sometimes ignore $K$ as it is fixed), which can be produced by a Metropolis Hastings algorithm. The acceptance probability with a newly proposed topological ordering $\tau'$ is:
\begin{equation}
    p = \min\{1,\frac{q(\tau\mid \tau') P(\tau' \mid \mathcal{D},K)}{q(\tau'\mid \tau) P(\tau \mid \mathcal{D},K)}\}.
\end{equation}
Note that $P(\tau \mid \mathcal{D},K) \propto P(\mathcal{D}\mid \tau,K) = P(\mathcal{D} \mid \G)$, where $\G$ is the DAG with skeleton $K$ and topological ordering $\tau$. The last quantity is the likelihood of empirical data given a graph and can be estimated by BIC \citep{andrews2018scoring,kitson2023survey}.


\subsection{Algorithm}\label{sec:algorithm}

Code is available at: https://github.com/zhongyi960403.

Our main algorithm, Algorithm \ref{alg:get_adjust} uses the theory from Section \ref{sec:adj change when graph change} to find valid adjustment sets when only the skeleton of the underlying DAG is known.

Some remarks about the algorithm: 
\begin{itemize}
    \item[i] By computing $L_{cur}$ from $\G_{cur}$, we mean that by utilizing Proposition \ref{prop:efficient adjustment set}, we test the validity of subsets of the optimal adjustment set of $\G_{cur}$. If an adjustment set is valid, we would include it in $L_{cur}$.  We usually test those subsets with size only one less than $\opadj(\G_{cur},x,y)$ and enough to show effectiveness.
    \item[ii] If the altered node satisfied the results in Sections \ref{sec:insignificant nodes} and \ref{sec:targeted sets}, we would update $L_{cur}$ faster. For example, we can copy the previous $L_{cur}$ if Lemma \ref{lemma:good nodes} is satisfied.
    \item[iii] Converting $L$  to $L_{\vec{s}}$ by $L_{en},\vec{s}$. Dividing $L$ by $L_{en}$ gives the empirical probability of being valid. The vector $\vec{s}$ is a vector of real numbers from 0 to 1. For a value $s$ in $\vec{s}$, we would include all subsets in $L_{en}$ that has higher probability than the highest value in $L_{en}$ timed by $s$. The higher the confidence $s$, the higher probability the sets appear and thus are more likely to be a valid adjustment set.
\end{itemize}
\begin{algorithm}
    \caption{Get\textunderscore adjustment\textunderscore set} 
    \label{alg:get_adjust} 
    \KwIn{Data $D$, x, y, skeleton $S$, sampling number $M$,threshold vector $\vec{s}$}
    \KwOut{A list of adjustment sets $L$}
    \SetAlgoLined  
    \textbf{Initialize} for $\G_{cur}, S_{cur}, L$\;
    \text{Learn }$\G_{cur}$ \text{ from }GES with $S$; Let its $BIC$ be $S_{cur}$\;
    \text{Define } $L,L_{cur}$ \text{ as a sparse vector, where}\,
    \text{each entry is a subset and its value is frequency of being valid}\;
    \text{Define } $L_{en}$ \text{ as a sparse vector, where}\,
    \text{each entry is a subset and its value is frequency of being tested}\;
    \text{Compute } $L_{cur}$ \text{ from } $\G_{cur}$\;
    \For{$m \gets 1$ \KwTo \(M\)}{
        \text{Sample } $\G_{new}$ \text{ from } $\G_{cur}$ \text{ and compute } $S_{new}$\;
        $p=\min(1,\exp(S_{new}-S_{cur}))$\;
        \If{$\uniform(0,1) < p$}{
          $\G_{cur} = \G_{new}$;$S_{cur} = S_{new}$\;
          \text{Compute } $L_{cur}$ \text{ from } $\G_{cur}$\;
          \text{Update } $L_{en}$\;
        }
        \Else{
          \text{next}\;
        }
        $L = L+L_{cur}$\;
    }
    \text{Convert } $L$ to $L_{\vec{s}}$ by $ L_{en},\vec{s}$\;
    \Return $L_{\vec{s}}$
\end{algorithm}

We use the output from GES as the initial DAG, but one can also start with any random DAG with the given skeleton.

In practice, a large sample size may cause the sampling to get stuck. One possible solution is to reduce the size of the data and have multiple runs to find the most common set. See the application example in Section \ref{Sec: real world example} for details. Another option would be to multiply the acceptance probability by a given constant, for example, according to size of data, which is implemented in our code.

\subsection{Complexity bound and consistency}\label{sec:consistency}

In this Section, we provide some theoretical analysis for Algorithm \ref{alg:get_adjust}.

\begin{proposition}
    The complexity of Algorithm \ref{alg:get_adjust} is of $O(n^2)$ for each sampled DAG.
\end{proposition}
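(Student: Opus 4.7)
The plan is to bound the cost of each individual iteration of the main \texttt{for} loop in Algorithm \ref{alg:get_adjust} by $O(n^2)$, which is what the proposition asks for. I would break each iteration into its constituent operations and bound each separately.

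First, the proposal step: the new DAG $\G_{new}$ is obtained by picking a node uniformly at random and reinserting it at a random position of the topological ordering, subject to $x$ preceding $y$. Sampling the index takes $O(1)$, shifting the ordering takes $O(n)$, and orienting the edges of the skeleton $S$ consistently with the new ordering takes $O(|E(S)|) = O(n^2)$ in the worst case. Computing the BIC score only requires recomputing the local scores at the altered node (and possibly nodes whose parent set changed), which is at most $O(n)$ local scores, each a constant-in-$n$ database query; the acceptance test is then $O(1)$.

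Second, and this is the dominant step, the recomputation of $L_{cur}$ from $\G_{cur}$. By Proposition \ref{prop:efficient adjustment set}, the algorithm restricts its search to subsets of $\opadj(\G_{cur},x,y)$. Computing $\opadj(\G_{cur},x,y) = \pa(\cn(\G_{cur},x,y)) \setminus \forb(\G_{cur},x,y)$ reduces to standard graph traversals (reachability from $x$, descendants of the causal nodes) that each cost $O(n + |E|) = O(n^2)$. As noted in the algorithm, only subsets of $\opadj(\G_{cur},x,y)$ of size $|\opadj(\G_{cur},x,y)| - 1$ are tested, so there are at most $|\opadj(\G_{cur},x,y)| \le n$ candidate subsets. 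The validity of each candidate can be tested in $O(n)$ time using the graphical criterion of \citet{van2019separators}. Hence constructing $L_{cur}$ costs $O(n) \cdot O(n) = O(n^2)$.

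Third, the sparse-vector updates: incrementing $L_{en}$ on the at most $n$ tested subsets and summing $L_{cur}$ into $L$ both cost $O(n)$. Putting everything together, the per-iteration cost is dominated by the $O(n^2)$ terms from skeleton orientation, traversals to compute $\opadj$, and validity checks, giving $O(n^2)$ per sampled DAG as claimed.

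The one subtlety I would want to treat carefully is the accounting for step (ii) in the remarks following Algorithm \ref{alg:get_adjust}: when the altered node satisfies the hypotheses of Lemma \ref{lemma: good node} or Lemma \ref{lemma:good nodes}, $L_{cur}$ can be inherited or updated only on the sets affected by the change, which keeps the bound but is never worse than the generic $O(n^2)$ analysis above. The main obstacle is therefore really bookkeeping: making sure that the stated bound does not hide a dependence on $|\opadj|$ that could blow up beyond $n$ in pathological skeletons, which the bound $|\opadj(\G_{cur},x,y)| \le |\V| = n$ rules out.
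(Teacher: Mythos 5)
Your proposal is correct and rests on the same key observation as the paper's proof: only the $O(n)$ subsets of $\opadj(\G_{cur},x,y)$ of size one less are tested, and each validity check costs $O(n)$ by \citet{van2019separators}, giving $O(n^2)$ per sampled DAG. Your additional accounting for the proposal step, BIC update, and sparse-vector bookkeeping is more thorough than the paper's two-line argument but does not change the approach or the bound.
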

\begin{proof}
The time to verify validity is linear for each set \citep{van2019separators}. For each sampled DAG, we check the $O(n)$ adjustment sets as the optimal adjustment sets are at most $O(n)$ and we only consider subsets that have a size one less than it.
\end{proof}

Now under a frequentist framework and assuming that there is one true underlying DAG, we analyse the behaviour of our algorithm when the data $\mathcal{D}$ are Gaussian or discrete and have infinite size. To do this, we need the concept of `completely partially oriented' DAG (CPDAG) \citep{Spirtes}. 

In short, CPDAG is a method for representing the Markov equivalence class of a DAG. It has the same skeleton as the DAG, consisting of directed and undirected edges. An edge in a CPDAG is directed if and only if all DAGs that are Markov equivalent to the given DAG have this edge. An edge is undirected if and only if there are two Markov equivalent DAGs where this edge is oriented oppositely.

The concept of `CPDAG' is important when we can only identify up to Markov equivalent class of DAGs, which are the cases for Gaussian linear models and discrete models. 

When data $\mathcal{D}$ are Gaussian or discrete and have infinite size, a DAG/CPDAG learning algorithm, for example, GES \citep{chickering2002optimal}, will identify the correct CPDAG. In our case, the algorithm will converge to the Markov equivalence class (MEC) of the true underlying DAGs. 

\begin{proposition}\label{prop: infinite data, true CPDAG}
Suppose the following:
\begin{itemize}
    \item the given skeleton $S$ is correct;
    \item the data $\mathcal{D}$ are Gaussian or discrete and have infinite size;
\end{itemize}

then as the sampling time $M \to \infty $, the sampled DAG is always Markov equivalent to the true underlying DAG.
\end{proposition}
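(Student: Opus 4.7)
The plan is to combine three ingredients: score equivalence and consistency of the BIC, consistency of GES in the infinite-data regime, and the structure of the Metropolis--Hastings acceptance rule.

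First, I would invoke the well-known score equivalence and consistency of BIC for Gaussian linear and discrete (multinomial) models \citep{chickering2002optimal,koller2009}. Score equivalence guarantees that two DAGs in the same MEC yield identical BIC values at every sample size, because they parametrize the same family of distributions. Consistency guarantees that, in the infinite-data limit, the BIC of any DAG whose MEC contains the true distribution exceeds the BIC of any non-equivalent DAG (with the same skeleton) by a gap that diverges to $+\infty$.

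Next, I would appeal to the consistency of GES \citep{chickering2002optimal}: run on infinite data and restricted to the correct skeleton $S$, GES returns a DAG in the true CPDAG, so the initial $\G_{cur}$ used in Algorithm \ref{alg:get_adjust} is already Markov equivalent to the true underlying DAG. Then I would analyse the acceptance probability $p = \min(1,\exp(S_{new}-S_{cur}))$ inductively, assuming $\G_{cur}$ lies in the true MEC. If the proposed $\G_{new}$ is Markov equivalent to $\G_{cur}$, score equivalence yields $S_{new}=S_{cur}$, so $p=1$ and the move is accepted; if $\G_{new}$ is not Markov equivalent, consistency forces $S_{new}-S_{cur}\to -\infty$, so $p\to 0$ and the move is almost surely rejected. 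Either way the post-update state remains in the true MEC, closing the induction.

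The main obstacle is handling the two limits cleanly. The statement should be read as taking the data size to infinity first, so that the acceptance probability of every out-of-MEC move is literally zero and the inductive argument is valid for \emph{every} $m \le M$ regardless of how large $M$ becomes. I would also flag the (harmless) fact that moves between distinct topological orderings that realise the same CPDAG are precisely those receiving $p=1$; they do not alter the MEC of the sampled DAG, so the chain can mix freely among equivalent representatives without violating the claim.
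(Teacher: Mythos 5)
Your argument is correct in its own terms, but it takes a genuinely different route from the paper, and the difference matters. The paper does not assume the chain starts in the true MEC; its proof is a \emph{reachability} argument: citing the local consistency of the BIC and the consistency of skeleton-constrained greedy search over single-node reorderings (\citet{linusson2022edges}, Propositions 4.2 and 4.3), it shows that from \emph{any} DAG with the correct skeleton there is always an alterable node whose reordering strictly improves the limiting score unless the current DAG is already in the true MEC; since in the infinite-data limit only non-worsening moves are accepted, the chain must climb into the MEC and, once there, never leaves. This is precisely the step that makes the hypothesis $M \to \infty$ non-vacuous. Your proof replaces that step by invoking consistency of the GES initialization so that $\G_{cur}$ is in the true MEC from iteration one, and then shows the MEC is absorbing (score equivalence gives $p=1$ for within-MEC proposals, the diverging score gap gives $p=0$ for out-of-MEC proposals). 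That absorption argument coincides with the second half of the paper's reasoning and is fine, but the reliance on GES is a real limitation: the paper explicitly states that one can initialize with any random DAG having the given skeleton, and for such an initialization your induction has no base case, whereas the paper's proof still applies. So your route buys a shorter argument for the algorithm as literally written, at the cost of proving a strictly weaker, initialization-dependent statement in which the limit $M\to\infty$ plays no role; the paper's route buys initialization-independence and an actual convergence statement, at the cost of needing the nontrivial consistency results of \citet{linusson2022edges} for greedy search restricted to a fixed skeleton.
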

\begin{proof}
This can be proven by the results of \citet{linusson2022edges}. When the data size goes to infinity, the acceptance probability is 0 if the new DAG has worse BIC and 1 otherwise. By the newly proven local consistency of BIC and the consistency of Skeleton Greedy CIM in \citet{linusson2022edges} (Propositions 4.2 and 4.3), there is always a node that can be altered to improve BIC unless we are already in the MEC of the true underlying DAG.
\end{proof}

However, identifying the true MEC does not necessarily mean that we can identify at least one valid adjustment set. See the following example.

\begin{example}
  \begin{figure}
\centering
  \begin{tikzpicture}
  [rv/.style={circle, draw, thick, minimum size=6mm, inner sep=0.8mm}, node distance=14mm, >=stealth]
  \pgfsetarrows{latex-latex};
\begin{scope}[xshift=-3cm]
  \node[rv]  (1)            {$x$};
  \node[rv, above of=1] (2) {$1$};
  \node[rv, right of=1] (3) {$y$};
  \node[ below of=1,xshift=0.7cm,yshift=0.5cm]{(i)};
  
  \draw[->, very thick, blue] (1) -- (3);
  \draw[->, very thick, blue] (2) -- (3);
  \draw[->, very thick, blue] (2) -- (1);
  \end{scope}
  \begin{scope}[xshift=0cm]
  \node[rv]  (1)            {$x$};
  \node[rv, above of=1] (2) {$1$};
  \node[rv, right of=1] (3) {$y$};
   \node[ below of=1,xshift=0.7cm,yshift=0.5cm]{(ii)};
  \draw[->, very thick, blue] (1) -- (3);
  \draw[->, very thick, blue] (2) -- (3);
  \draw[->, very thick, blue] (1) -- (2);
  \end{scope}
  \begin{scope}[xshift=3cm]
  \node[rv]  (1)            {$x$};
  \node[rv, above of=1] (2) {$1$};
  \node[rv, right of=1] (3) {$y$};
   \node[ below of=1,xshift=0.7cm,yshift=0.5cm]{(iii)};
  \draw[->, very thick, blue] (1) -- (3);
  \draw[->, very thick, blue] (3) -- (2);
  \draw[->, very thick, blue] (1) -- (2);
  \end{scope}
\end{tikzpicture}
\caption{True MEC is not enough}
\label{fig: True MEC is not enough}
\end{figure}
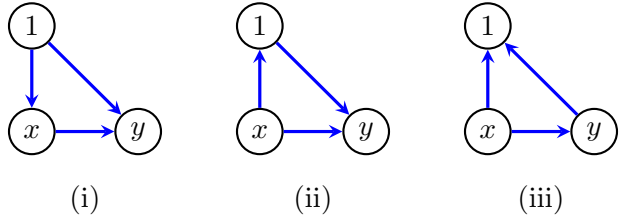     
Consider Figure \ref{fig: True MEC is not enough}. Suppose that the true underlying DAG is $\G$ in (i). Both graphs $\G'$ and $\G''$ in (ii) and (iii) are Markov equivalent to it and satisfy the condition that $x$ precedes $y$. The only valid adjustment set is $\{1\}$, while $\emptyset$ is valid in both $\G'$ and $\G''$. Therefore, by a uniform sampling of topological ordering conditioned on $x$ preceding $y$, we would obtain a probability of one third for $\{1\}$ and a probability of two thirds for $\emptyset$, without identifying the true valid adjustment set.

This is an issue related to the concept of `amenability' that arises when studying the graphical criterion for valid adjustment set in CPDAGs \citep{perkovi2018complete}. 

\begin{definition}
    A CPDAG $\G$ is amenable if for every possibly directed path from $x$ to $y$, it begins with $x \rightarrow$.
\end{definition}

A CPDAG must be amenable to have a valid adjustment set. The CPDAG for $\G$ in this example is not amenable, so there is no valid adjustment set in the CPDAG, that is, there is no adjustment set that is valid for all DAGs lying in the MEC represented by the CPDAG.
\end{example}

However, we can show that when the underlying DAG has an amenable CPDAG, our algorithm will pick only valid adjustment sets.

\begin{proposition}\label{prop: convergence result}
Suppose the following:
\begin{itemize}
    \item the given skeleton $S$ is correct;
    \item the data $\mathcal{D}$ are Gaussian or discrete and have infinite size;
    \item the CPDAG of the true underlying DAG $\G$ is amenable;
\end{itemize}

then as the sampling time $M \to \infty $, any adjustment set whose frequency of being valid divided by the testing frequency converge to 1 only if this set is a valid adjustment set in $\G$. 

In addition, the optimal adjustment set is included in the output list of sets in Algorithm 1.   
\end{proposition}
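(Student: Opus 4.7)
The plan is to build on Proposition \ref{prop: infinite data, true CPDAG}. In the infinite-data limit, the Markov chain eventually enters the Markov equivalence class (MEC) of the true DAG $\G$ and never leaves it, because BIC is MEC-invariant for Gaussian or multinomial data and the local consistency result of \citet{linusson2022edges} forces any worse-BIC proposal to be rejected. Within the MEC, however, the Metropolis--Hastings acceptance probability equals $1$ on every proposal: the BIC does not change between MEC-equivalent DAGs, and the proposal $q$ is symmetric. The chain therefore performs a symmetric random walk on the set of topological orderings consistent with the true skeleton and with $x \prec y$. Because the CPDAG is amenable, $y$ is not an ancestor of $x$ in any DAG of the MEC, so every such DAG is represented by at least one admissible ordering; the single-vertex repositioning proposal connects them all, and the chain is ergodic on this set with (asymptotically) uniform occupation frequencies.

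For the \emph{only if} direction, suppose a set $A$ has valid-frequency to test-frequency ratio converging to $1$. By ergodicity, every DAG $\G'$ in the MEC is sampled with positive asymptotic frequency, and Algorithm \ref{alg:get_adjust} records a validity test of $A$ against $\G'$ whenever $A$ is among the tested subsets for $\G'$. A ratio equal to $1$ therefore forces $A$ to be a valid adjustment set in every DAG of the MEC in which it is tested. I would then invoke the characterization of adjustment sets in amenable CPDAGs due to \citet{perkovi2018complete}: a set valid in every DAG of the MEC of an amenable CPDAG satisfies the generalized adjustment criterion on the CPDAG, and is in particular valid in $\G$.

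For the second claim, I would first show that $\opadj(\G,x,y)$ is MEC-invariant under amenability. The sets $\cn(\G,x,y)$ and $\forb(\G,x,y)$ are determined by the directed paths from $x$ to $y$, which are fixed across the MEC once amenability forces every possibly directed path from $x$ to $y$ to start with $x \to$; consequently $\pa(\cn(\cdot))\setminus\forb(\cdot)$ is also MEC-invariant. Hence in every sampled DAG $\G_{cur}$ after burn-in, the algorithm computes the same set $\opadj(\G,x,y)$, includes it among the tested subsets of $\opadj(\G_{cur})$, and finds it valid (since $\opadj$ is always a valid adjustment set in its own DAG, and by MEC-invariance this carries over to $\G$). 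Its counts in $L$ and $L_{en}$ coincide and attain the maximum, so its empirical validity probability equals the maximum in $L_{en}$ and it survives every threshold $s\in(0,1]$, hence appears in the output $L_{\vec s}$.

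The main obstacle will be twofold: (i) establishing irreducibility of the single-vertex repositioning proposal on the orderings representing DAGs in the MEC with $x\prec y$, which is a combinatorial connectivity statement essentially contained in the analysis of the turning phase of GIES in \citet{linusson2022edges}; and (ii) justifying MEC-invariance of both the validity of $A$ and of $\opadj$ from amenability, which requires a careful appeal to \citet{perkovi2018complete} and to the CPDAG version of the optimal adjustment set in \citet{henckel2022graphical}. Step (ii) is the delicate piece, because one has to rule out that some local reorientation within the MEC creates or destroys a non-causal connecting path or changes the parents of a causal node; amenability is precisely what prevents this by fixing the orientations incident to the causal part of the graph.
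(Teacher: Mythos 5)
Your proposal follows essentially the same route as the paper's proof: Proposition \ref{prop: infinite data, true CPDAG} gives convergence of the sampler to the true MEC, amenability plus Lemma E.7 of \citet{henckel2022graphical} makes the optimal adjustment set MEC-invariant and hence tested and found valid at every post-convergence iteration, and a validity-to-testing ratio of $1$ over the finitely many Markov-equivalent DAGs forces validity in $\G$ itself. The extra machinery you flag --- irreducibility of the single-vertex reordering move within the MEC, and the \citet{perkovi2018complete} characterization linking ``valid in every DAG of an amenable MEC'' to validity in $\G$ --- fills in steps the paper leaves implicit rather than changing the argument.
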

\begin{proof}
By Proposition \ref{prop: infinite data, true CPDAG}, Algorithm 1 converges to DAGs in the CPDAG of the true DAG. Because the CPDAG is amenable, there exists an optimal adjustment set which is also the optimal adjustment set for every DAG represented by the CPDAG (Lemma E.7 in \citet{henckel2022graphical}). Hence, at every iteration, this optimal adjustment set is picked up and will be included in the output list of sets. 

If, for a set $A$, its frequency of being valid divided by the testing frequency converges to 1, this means that once we arrive at the true CPDAG, this set is valid for every DAG, as the number of Markov equivalent DAGs is finite for the CPDAG. In particular, it is valid in the true underlying DAG.
\end{proof}

In Section \ref{sec:experiment}, we conduct experiments about how `amenability affects algorithms' performance.

\begin{remark}
The results in this section merely serve as a sanity check to ensure that our algorithm produces meaningful outputs as the number of data points and the sampling time approach infinity. They are not of any practical use, since if the data are infinite in size, any DAG/CPDAG learning algorithm can identify a valid adjustment set. The essence of our algorithms lies in the situation where the data are finite and empirical mistakes are made.
\end{remark}

\section{Experiments and real data analysis}
In this section, we conduct three experiments for (1) comparison with the baseline algorithm; (2) how the amenability of ground truth graphs affects performance; (3) how sample size and sampling time affect algorithm performance conditioning on whether the ground truth graphs are amenable or not.

\subsection{Simulation set-up and measure metrics}

For Experiment 1, for each even integer from 6 to 20, we simulate 100 DAGs, which is done by generating a random skeleton with expected edge degree 3, then randomly choosing a topological ordering to orient the skeleton. For each edge, the coefficient is uniformly distributed independently from -1 to 1. Then a random pair of $x$ and $y$, conditioned on $x$ being an ancestor of $y$, is chosen for each ground truth DAG. For each DAG, we generate 100 data points and put the required elements in Algorithm \ref{alg:get_adjust}. For each threshold $s$, we compute the corresponding precision and MSE. The sampling number is set to 100.

For Experiments 2, the settings are the same as those of Experiment 1 except that when we randomly generate DAGs, we require their CPDAGs to be amenable/non-amenable.

For Experiments 3, there are two runs and plots. For one of them, the sample sizes $\log(N) $ range from 3 to 10 with step 0.5 and the sampling time $M$ is set at 40. For the other, the sampling time $M$ varies from 1 to 30 with step 2 and the sample size is fixed at 100. The number of nodes is fixed to 10.

The metrics we use to evaluate algorithms are precision and mean squared error (MSE). Specifically, precision is the percentage of algorithm's output sets that are actually valid in the ground-truth DAGs. The MSE is evaluated with respect to the true causal effect and, if there are multiple adjustment sets for one data set, we average the MSE.

\subsection{Experiment 1: comparison to baseline with known/estimated skeleton}
Experiment 1, whose results are presented in  Tables \ref{tab:precision_true_vs_ges} and \ref{tab:mse_true_vs_ges}, serves to compare the performance between our algorithm and the baseline algorithms. The experiments are also conducted with both the true skeleton and the estimated skeleton to see how the assumption of a known skeleton affects precision to select valid adjustment sets. 

In the tables, each entry is in the form of $x/y$, corresponding to skel$_{\text{true}}$/skel$_{\text{GES}}$, where  skel$_{\text{true}}$ stands for the case where the skeletons are the skeletons of ground-truth DAGs, and skel$_{\text{GES}}$ represents the case where the skeletons are estimated by GES. The column $\opadj(\G_{g},x,y)$ corresponds to the optimal adjustment sets from the GES output. $\opadj(\G_{r},x,y)$ are the optimal adjustment sets from random DAGs generated from given true skeletons. The last column, MEC, is the percentage of GES output that reaches the true Markov equivalence class.

Selecting $s=1$ means that we only include the set(s) with the highest frequency to appear. Selecting $s=0$ means that we include all sets that appear to be valid in some sampled DAGs. In general, as $s$ decreases, the precision/MSE becomes worse. The numbers in $s=1$ do not necessarily beat the numbers in $s=0$ in finite sample sizes and sampling times.

The cases for $s=1/s=0.8$ with the given skeleton in general have the best performance. Our algorithms with estimated skeleton are generally worse than those with true skeletons, but there is still obvious improvement compared to baseline.

We also plot the computational time against the number of nodes per 100 DAGs in Figure \ref{fig: time plot}. Empirically, one can clearly see a linear tendency, showing that our algorithm scales well. Moreover, this justifies the theoretical analysis of algorithm complexity in Section \ref{sec:consistency}.

\begin{table}[htbp]
    \centering
    \caption{$x/y$: given skeletons/GES skeletons. $\opadj(\G_{g},x,y)$,$\opadj(\G_{r},x,y)$ are optimal adjustment sets from GES and random DAGs.MEC: percentage of true MEC.}
    \label{tab:precision_true_vs_ges}
    \sisetup{round-mode=places, round-precision=2}

    \begin{tabular}{r r r r r r r r r}
        \toprule
        $n$ & \multicolumn{8}{c}{Precision for skel$_{\text{true}}$ / skel$_{\text{GES}}$} \\
        \cmidrule(lr){2-9}
            & $s=1.0$ & $s=0.8$ & $s=0.5$ & $s=0.3$ & $s=0$ & $\opadj(\G_{g},x,y)$  & $\opadj(\G_{r},x,y)$ & MEC\\
        \midrule
         6  & \textbf{0.73}/0.60 & 0.70/0.58 & 0.63/0.54 & 0.55/0.45 & 0.38/0.31 & 0.41/0.48 & 0.18 & 0.38\\
         8  & 0.66/0.64 & \textbf{0.68}/0.63 & 0.65/0.58 & 0.55/0.49 & 0.37/0.28 & 0.42/0.50 & 0.33 & 0.22\\
        10  & \textbf{0.71}/0.65 & 0.71/0.64 & 0.70/0.61 & 0.62/0.57 & 0.44/0.33 & 0.50/0.49 & 0.29 & 0.19\\
        12  & \textbf{0.81}/0.72 & 0.80/0.70 & 0.80/0.69 & 0.75/0.67 & 0.46/0.35 & 0.47/0.49 & 0.22 & 0.12\\
        14  & \textbf{0.74}/0.64 & 0.73/0.64 & 0.70/0.62 & 0.67/0.59 & 0.40/0.29 & 0.48/0.51 & 0.25 &0.11\\
        16  & 0.68/0.62 & \textbf{0.69}/0.61 & 0.68/0.58 & 0.65/0.55 & 0.39/0.31 & 0.52/0.51 & 0.13 & 0.06\\
        18  & \textbf{0.75}/0.70 & 0.75/0.70 & 0.76/0.68 & 0.74/0.67 & 0.44/0.35 & 0.51/0.55 & 0.27 &0.12\\
        20  & 0.77/0.62 & \textbf{0.79}/0.62 & 0.76/0.63 & 0.74/0.64 & 0.41/0.35 & 0.50/0.47 & 0.34 & 0.06\\
        \bottomrule
    \end{tabular}
\end{table}

\begin{table}[htbp]
    \centering
    \caption{MSE comparison: skel$_{\text{true}}$ / skel$_{\text{GES}}$.}
    \label{tab:mse_true_vs_ges}
    \sisetup{round-mode=places, round-precision=2}

    \begin{tabular}{r r r r r r r r r}
        \toprule
        $n$ & \multicolumn{7}{c}{MSE$\times 10^{-2}$ for skel$_{\text{true}}$ / skel$_{\text{GES}}$} \\
        \cmidrule(lr){2-8}
         & $s=1$ & $s=0.8$ & $s=0.5$ & $s=0.3$ & $s=0$ & $\opadj(\G_{g},x,y)$ & $\opadj(\G_{r},x,y)$\\
        \midrule
         6  & \textbf{1.50}/1.75 & 1.57/1.87 & 1.74/2.02 & 1.87/2.11 & 2.61/2.70 & 2.31/2.15 & 2.42\\
         8  & 1.93/2.03 & 1.84/1.99 & \textbf{1.84}/2.17 & 1.87/2.32 & 2.60/2.49 & 3.00/2.41 & 2.23\\
        10  & \textbf{1.56}/1.94 & 1.63/1.99 & 1.60/2.08 & 1.74/2.12 & 2.79/3.29 & 2.40/2.29 & 3.23\\
        12  & 1.31/1.53 & \textbf{1.26}/1.50 & 1.32/1.63 & 1.53/1.67 & 2.54/2.79 & 2.50/3.22 & 2.35  \\
        14  & 1.83/1.98 & \textbf{1.83}/2.08 & 1.86/2.02 & 2.01/2.05 & 3.34/3.69 & 3.18/2.97 & 2.57\\
        16  & 1.45/2.22 & \textbf{1.43}/2.18 & 1.54/2.07 & 1.68/2.16 & 3.31/3.98 & 2.33/2.56 & 2.25\\
        18  & \textbf{1.37}/2.16 & 1.42/2.04 & 1.42/2.10 & 1.61/2.20 & 2.61/3.12 & 2.36/2.67 & 2.60\\
        20  & \textbf{1.41}/2.13 & 1.44/2.15 & 1.46/2.12 & 1.46/1.98 & 3.27/3.10 & 2.52/2.73 & 2.84\\
        \bottomrule
    \end{tabular}
\end{table}

\subsection{Experiment 2 and 3: Amenable vs non-amenable} 

The results of Experiment 2 are summarized in Tables \ref{tab:precision_combined} and \ref{tab:mse_combined} for precision and MSE, respectively. For precisions, one can clearly observe that figures in general favour the case when true graphs are amenable. The MSE results, on the other hand, are a bit surprising where algorithms seem to perform better for non-amenable graphs. One possible explanation for this is that non-amenable graphs are in general `denser' than amenable graphs, therefore, the true causal effect/variance of adjustment sets distribute unevenly.

\begin{table}[htbp]
    \centering
    \caption{Combined Precision results (Amenable / Non-amenable), Experiment 2.}
    \label{tab:precision_combined}
    \sisetup{round-mode=places, round-precision=2}
    \begin{tabular}{r c c c c c}
        \toprule
        $n$ & $s=1$ & $s=0.8$ & $s=0.5$ & $s=0.3$ & $s=0$ \\
        \midrule
        6  & \textbf{0.90}/0.75 & 0.85/0.71 & 0.80/0.64 & 0.71/0.55 & 0.58/0.41  \\
        8  & \textbf{0.77}/0.76 & 0.76/0.75 & 0.73/0.69 & 0.68/0.59 & 0.54/0.41  \\
        10 & \textbf{0.89}/0.72 & 0.88/0.71 & 0.85/0.65 & 0.82/0.61 & 0.60/0.44 \\
        12 & \textbf{0.77}/0.70 & 0.76/0.71 & 0.73/0.69 & 0.71/0.66 & 0.44/0.45 \\
        14 & \textbf{0.81}/0.78 & 0.80/0.76 & 0.79/0.73 & 0.76/0.70 & 0.50/0.44  \\
        16 & \textbf{0.79}/0.74 & 0.78/0.73 & 0.75/0.72 & 0.74/0.69 & 0.49/0.41 \\
        18 & 0.76/0.73 & \textbf{0.76}/0.71 & 0.74/0.67 & 0.71/0.66 & 0.39/0.41  \\
        20 & \textbf{0.70}/0.64&0.70/0.64 & 0.68/0.59 & 0.69/0.58 & 0.37/0.32 \\
        \bottomrule
    \end{tabular}
\end{table}

\begin{table}[htbp]
    \centering
    \caption{Combined MSE$\times 10^{-2}$ results (Amenable / Non-amenable), Experiment 2.}
    \label{tab:mse_combined}
    \begin{tabular}{r c c c c c c}
        \toprule
        $n$ & $s=1$ & $s=0.8$ & $s=0.5$ & $s=0.3$ & $s=0$ & $\opadj(\G,x,y)$ \\
        \midrule
        6  & \textbf{1.42}/1.31 & 1.55/1.44 & 1.59/1.74 & 1.80/2.10 & 2.30/2.64 & 0.64/0.66 \\
        8  & 1.25/\textbf{1.17} & 1.30/1.23 & 1.23/1.46 & 1.35/1.83 & 1.91/2.43 & 0.71/0.60\\
        10 & \textbf{1.33}/1.98 & 1.44/1.97 & 1.37/2.26 & 1.64/2.19 & 3.05/2.61 & 0.66/0.65\\
        12 & 1.63/\textbf{1.30} & 1.71/1.36 & 1.89/1.48 & 2.12/1.54 & 3.20/2.24 & 0.73/0.70\\
        14 & 1.38/\textbf{1.22} & 1.43/1.28 & 1.47/1.41 & 1.65/1.49 & 2.33/3.13 & 0.69/0.67\\
        16 & 1.35/1.35 & \textbf{1.34}/1.36 & 1.41/1.45 & 1.44/1.65 & 2.32/2.74 & 0.79/0.62\\
        18 & 1.82/\textbf{1.31} & 1.81/1.35 & 1.79/1.53 & 2.04/1.51 & 3.22/2.94 & 0.72/0.73\\
        20 & 1.48/\textbf{1.30}& 1.49/1.34 & 1.49/1.45 & 1.57/1.46 & 2.73/3.21  & 0.80/0.74\\
        \bottomrule
    \end{tabular}
\end{table}

\begin{figure}[htbp]
    \centering
    \includegraphics[width=0.6\linewidth]{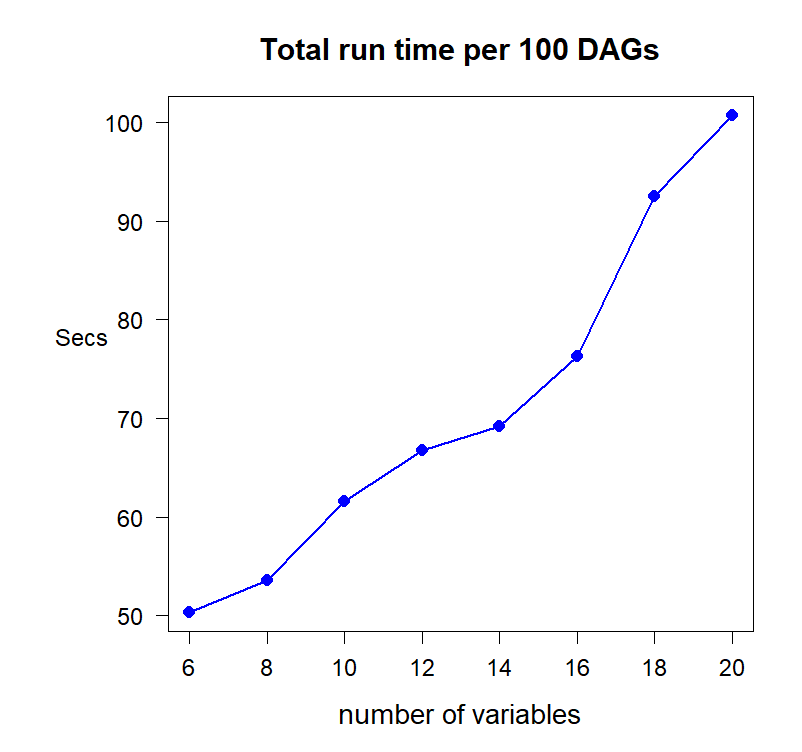}
    \caption{Total run time per 100 DAGs as a function of the number of nodes, displaying a linear tendency.}
    \label{fig: time plot}
\end{figure}

Figures \ref{fig: conv_sample_size plot} and \ref{fig: conv_sampling plot} present the results of Experiment 3. One can clearly observe that as the number of samples approaches infinity, the precision approaches 1 for amenable graphs while this behavior is not present for non-amenable graphs. A similar pattern appears when we increase the sampling time. This again justifies our result in Proposition \ref{prop: convergence result}.

\begin{figure}[htbp]
    \centering
    \includegraphics[width=0.8\linewidth]{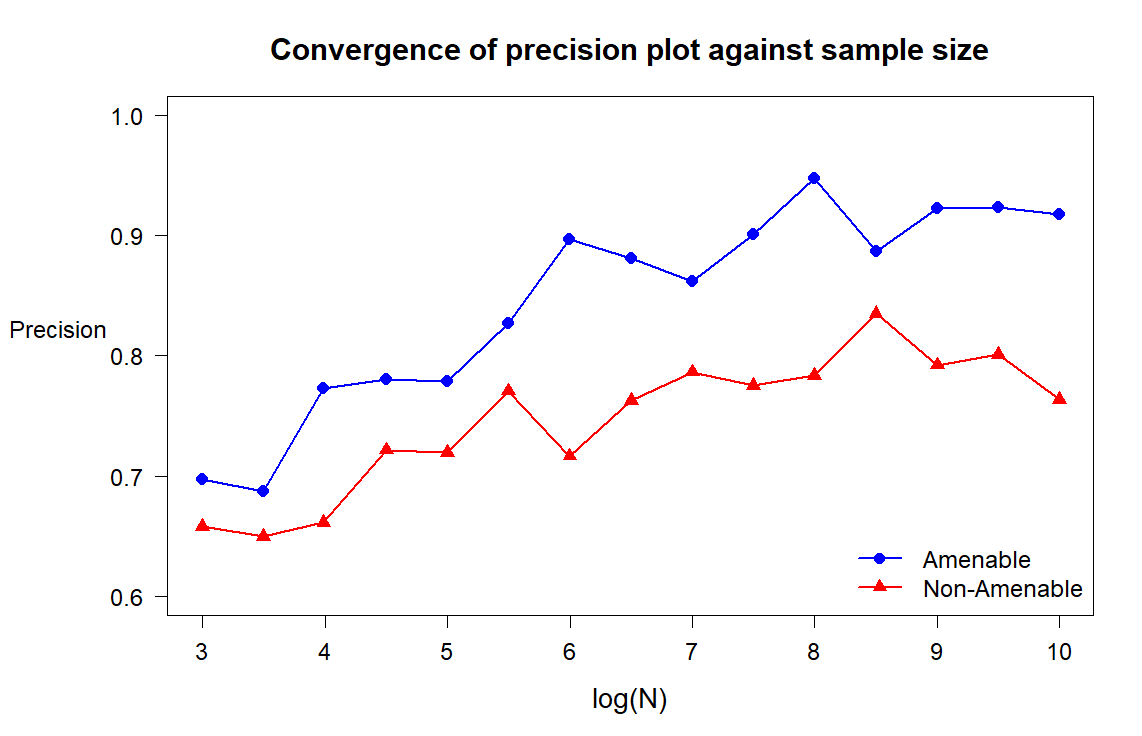}
    \caption{Precision plot against sample size. Precision is higher for amenable DAGs. Experiment 3.}
    \label{fig: conv_sample_size plot}
\end{figure}

\begin{figure}[htbp]
    \centering
    \includegraphics[width=0.8\linewidth]{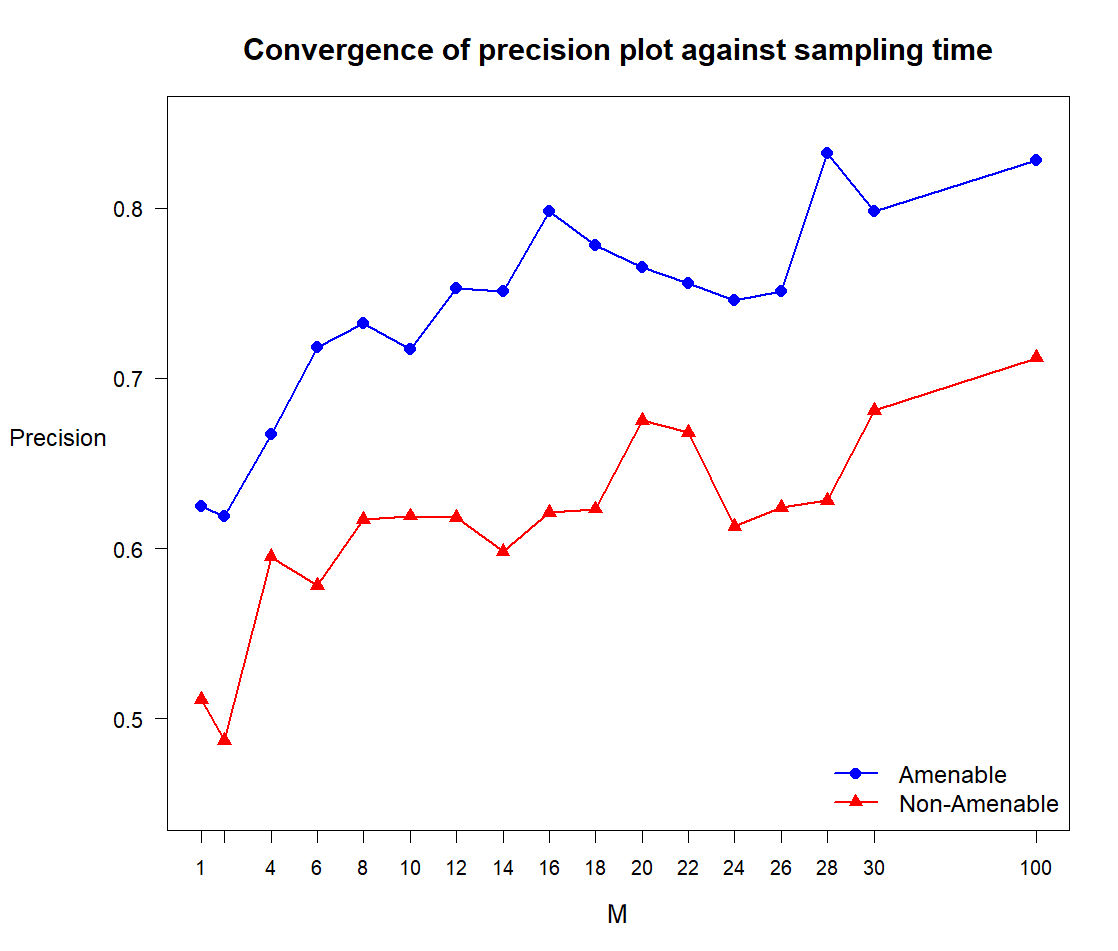}
    \caption{Precision plot against sampling time. Precision is higher for amenable DAGs. Experiment 3.}
    \label{fig: conv_sampling plot}
\end{figure}

\subsection{Real data example: Sachs}\label{Sec: real world example}
In this section, we apply our algorithm to a real-world data set from \citet{sachs2005causal}. The data set is a widely used dataset in causal inference. It was introduced by Sachs et al. (2005) to study complex relationships in cellular signaling pathways. The data set consists of 7466 points with 11 variables representing different proteins and molecules that are part of the signaling network. In Figure \ref{fig: sachs plot}, we provide the most commonly accepted ground-truth DAG \citet{kleinegesse2022domain} for the Sachs data set. 

There are different theories about the ground truth DAG. The main difference is whether there is any edge from $\text{Plcg}, \text{PIP3}, \text{PIP2}$ to the rest of the nodes. A study on recovering the DAG from the data by \citet{bnlearn_sachs} shows that even if there are, they are weak edges. Therefore, we choose the DAG shown in Figure \ref{fig: sachs plot}. As input to our algorithm, we will use only the skeleton. 

There is no obvious choice for the treatment $x$ and the outcome $y$. To explore a pair of nodes where non-causal confounding paths appear, we choose the treatment $x$ to be Mek and the outcome $y$ to be Erk. There is a directed path/edge from Mek to Erk, as well as confounding paths such as $\text{Mek} \leftarrow \text{PKA} \rightarrow \text{Erk}$. In this case, the optimal adjustment set is $\{\text{PKA}\}$. 

To avoid the algorithm getting stuck and to allow it to explore more freely, we reduce the data size to 100 with multiple runs and count how many times for each set appear at the top of the list. A similar procedure is performed for the GES algorithm. Our algorithm finds that $\{\text{PKA}\}$ appears the most often, more than twice as often as the set $\{\text{PKA},\text{Akt}\}$. The GES algorithm, on the other hand, identifies $\{\text{PKA},\text{Akt}\}$ as the most possible valid adjustment set.

The GES with known skeleton often produces DAGs similar to the DAG in Figure \ref{fig: wrong sachs figure} in the appendix, which is not the ground-truth DAG and consider Akt as one parent of Erk. The resulting empirical optimal adjustment set is not a valid adjustment set, which is $\opadj(\G^{'},x,y)=\{\text{Akt},\text{PKA}\}$. This shows that sampling techniques are preferred. 
\begin{figure}
    \centering
    \includegraphics[width=0.6\linewidth]{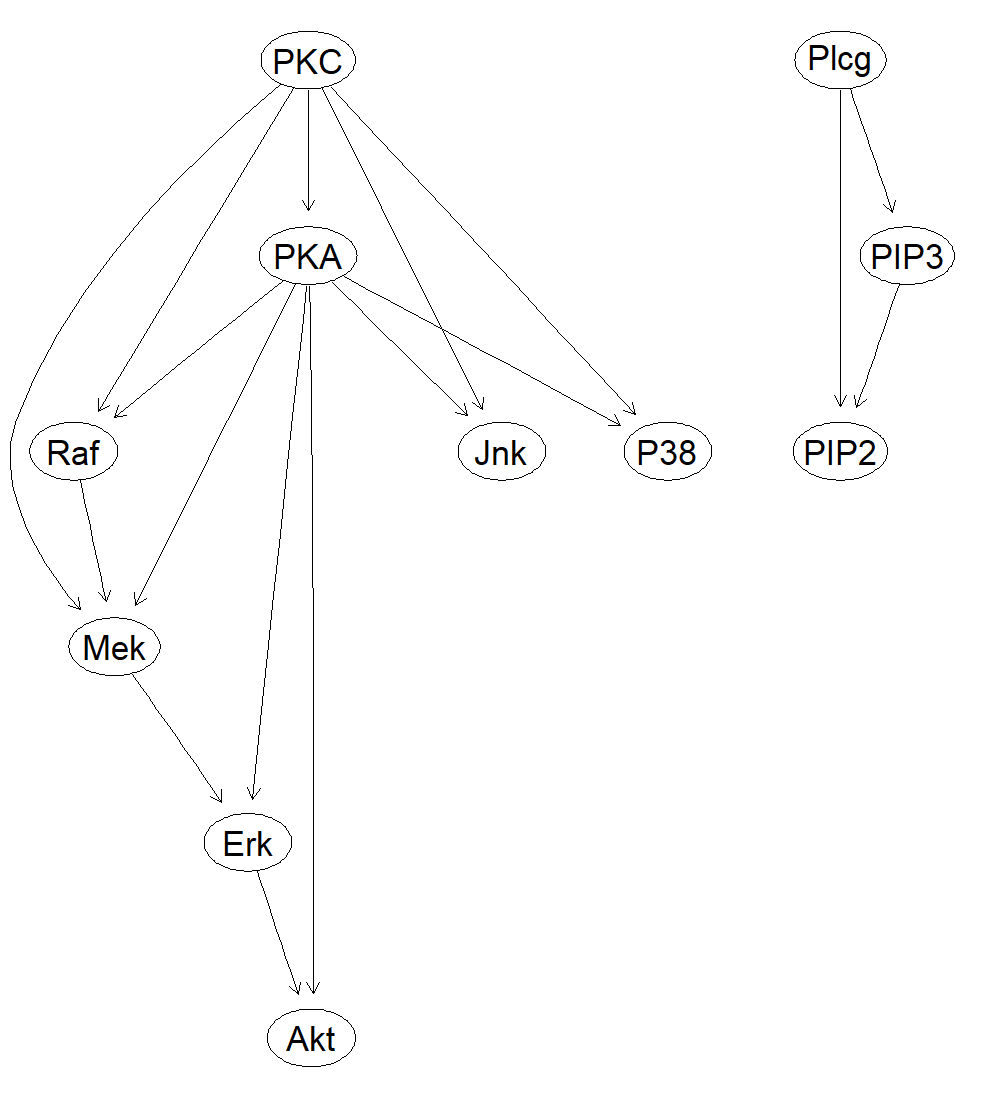}
    \caption{Ground truth DAG for Sachs data set.}
    \label{fig: sachs plot}
\end{figure}
\section{Discussion}\label{sec:discussion}
In this work, we present a method to select adjustment sets while not knowing the full underlying causal DAG. The method relies on two important features: sampling graphs and testing adjustment sets. While we mainly focus on the testing sets process, our contributions are twofold: (i) we offer a framework to identify valid adjustment sets with higher precision than traditional methods that researchers would now use in practice; (ii) we establish some theoretical foundation and practical techniques to accelerate the testing sets process, which would be exponential if done in the naive way. Proposition \ref{prop:efficient adjustment set} not only ensures that even if a limited number of sets are tested, the output sets still tend to be valid and efficient, but also can be generalized to more relaxed graphical assumptions rather than a known and fixed skeleton.

For the sampling procedure, we work with a linear Gaussian graphical model and fixed skeleton because this is one of the most basic models (yet common), and we are able to build theory and algorithms, and present our idea more clearly. However, one can easily choose to work with different parametric graphical assumptions. The results of Proposition \ref{prop:invariant adjustent set} and Lemma \ref{lemma: good node} may not be applied directly, but Proposition \ref{prop:efficient adjustment set} can be used to compute candidates of adjustment sets from each sampled graph. An important note for the sampling procedure is that we do not want the algorithms to converge to a single DAG; instead, the sampling algorithms should explore around the space of the `best' DAG, which allows those `stable' adjustment sets to appear and be identified.

Some other practical methods have also been studied to reduce the empirical error from a single output of DAG learning algorithms. For example, one may bootstrap the data and compute multiple DAGs, then decide on the orientation of each edge by voting from these DAGs \citep{bnlearn_sachs}. However, this method is not suitable for estimating valid adjustment sets, as merely assembling the edges may result in a cyclic graph. If one computes valid adjustment sets from these DAGs and let them vote the most frequent set, this method is still less efficient than using sampled DAGs, as Section \ref{sec:adj change when graph change} shows, we can quickly check whether some sets remain valid/invalid based on previously sampled DAGs.

\section{Acknowledgement}
This work is funded by the European Union. Views and opinions expressed are, however, those of the author(s) only and do not necessarily reflect those of the European Union or the European Research Council Executive Agency. Neither the European Union nor the granting authority can be held responsible for them. This work is supported by ERC grant BayCause, nr. 101074802.
\bibliographystyle{abbrvnat} 
\bibliography{references}       

\newpage

\begin{appendix}
\section{Appendix}

An initial step towards avoiding examining every set is that we can only consider adjustment sets that precede $Y$ in the topological ordering. This can be justified by the following proposition.

Let $L_Y$ be the set of nodes that succeeds $Y$ in a given topological order. We assume that $X$ always precedes $Y$ in any topological ordering.
\begin{proposition}\label{prop:precedingtopological}
Suppose $A$ is a valid adjustment set for $X$ and $Y$, then $A \setminus L_Y$ is also a valid adjustment set.    
\end{proposition}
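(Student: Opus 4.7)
The plan is to verify the two conditions of Definition \ref{def:equiv def for adj set} for the set $A' := A \setminus L_Y$. Condition (i), $A' \cap \forb(\G,x,y) = \emptyset$, is immediate since $A' \subseteq A$. All the work is in condition (ii): showing that $x$ is d-separated from $y$ by $A'$ in $\G^{pd}_{xy}$.

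I would argue by contradiction. Suppose some path $p$ from $x$ to $y$ in $\G^{pd}_{xy}$ is d-connecting given $A'$. Since $A$ is valid, $p$ must be blocked given $A$. Passing from $A'$ to $A$ only adds nodes in $A \cap L_Y$, and such an enlargement can newly block $p$ only by conditioning on a previously unconditioned non-collider — colliders open under $A'$ remain open under $A$ because a descendant of a collider lying in $A'$ also lies in $A$. Hence there exists a non-collider $Z$ of $p$ with $Z \in A \cap L_Y$.

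Since $Z$ is a non-collider, at least one of the two edges of $p$ at $Z$ is outgoing from $Z$; WLOG it points along $p$ toward $y$ (the symmetric case toward $x$ is analogous). Write the corresponding subpath of $p$ as $Z = W_0 \to W_1 - W_2 - \cdots - W_k = y$. The main structural lemma I would prove by induction on $j$ is: for each $1 \le j \le k-1$, $W_j \in L_Y$, and every edge $W_{j-1} - W_j$ is oriented $W_{j-1} \to W_j$. The base case follows from $Z \in L_Y$ and $W_1 \in \de(Z)$, since descendants of a node succeeding $y$ in a topological order themselves succeed $y$. For the inductive step, if instead $W_j \leftarrow W_{j+1}$, then $W_j$ would be a collider on $p$ (combining the inductive $W_{j-1} \to W_j$ with $W_j \leftarrow W_{j+1}$), and since $W_j \in L_Y$ forces $\de(W_j) \subseteq L_Y$, we would get $\de(W_j) \cap A' = \emptyset$, so $W_j$ would block $p$ given $A'$, contradicting d-connection.

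Applying the lemma all the way to $W_k = y$ yields a directed path $Z \to W_1 \to \cdots \to W_k = y$ in $\G^{pd}_{xy}$, making $Z$ an ancestor of $y$ and forcing $Z$ to precede $y$ in the topological ordering — contradicting $Z \in L_Y$. In the symmetric case with the outgoing edge at $Z$ pointing toward $x$, the same induction produces a directed path from $Z$ to $x$, so $Z$ would be an ancestor of $x$; but $x$ precedes $y$, and $y$ precedes $Z$, which is again a contradiction. The main obstacle, as I see it, is setting up the induction cleanly (in particular, handling the final step at the endpoint $y$ without treating $y$ itself as a member of $L_Y$); once the fact that descendants of nodes in $L_Y$ remain in $L_Y$ is in place, the collider-blocking step closes each inductive case and the conclusion is automatic.
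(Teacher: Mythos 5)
Your proof is correct and follows essentially the same route as the paper's: both arguments locate a non-collider in $A \cap L_Y$ on a path that is open given $A' = A \setminus L_Y$ but blocked given $A$, and show that such a node would have to be an ancestor of a conditioned collider or of an endpoint, which is impossible for a node succeeding $Y$ in the topological order. Your explicit induction along the outgoing edge is just a self-contained unpacking of the standard fact the paper invokes (every non-collider on a path is an ancestor of some collider on it or of an endpoint), so the two proofs coincide in substance.
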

\begin{proof}
Suppose $A' := A \setminus L_Y$ is not valid. Certainly, criterion \eqref{criteria 1} is met, so there must exist a non-causal path from $X$ to $Y$ that is blocked by $A$ but not by $A'$. 

Consider any such non-causal path $\pi$. If there is no collider, then it must be a confounding path, i.e.\ $X \leftarrow  \dots  \leftarrow ? \rightarrow  \dots \rightarrow Y$, but then every node on the path precedes $X$ and $Y$, and hence whether it is blocked is not affected by removing $L_Y$.

Suppose there is at least one collider on the path $\pi$. Because this path is not blocked by $A'$, it means that every collider on the path $\pi$ is contained in $\an(A') \subseteq \an(A)$, thus, if it is blocked by $A$ but not $A'$,  there must be some nodes on the path that are non-colliders and are in $A \setminus A' = L_Y$. Let $C$ denote all the colliders on the path. For any non-collider on the path, it must be in $\an(C \cup \{X,Y\}) \subseteq \an(A' \cup\{X,Y\})$. But, $L_Y \cap \an(A' \cup \{X,Y\}) = \emptyset$, therefore, we arrive at a contradiction.
\end{proof}

 \begin{figure}[htbp]
\centering
 \begin{tikzpicture}
  [rv/.style={circle, draw, thick, minimum size=6mm, inner sep=0.8mm}, node distance=20mm, >=stealth]
  \pgfsetarrows{latex-latex};
\begin{scope}
  \node[rv]  (1)            {$\text{Raf}$};
  \node[rv, below of=1] (2) {$\text{Mek}$};
   \node[rv, above right of=1] (8) {$\text{PKA}$};
  \node[rv, above of=8] (9) {$\text{PKC}$};
  \node[rv, below right of=8] (11) {$\text{Jnk}$};
  \node[rv, right of=11] (10) {$\text{P38}$};
  \node[rv, right of=10] (4) {$\text{PIP2}$};
  \node[rv, above of=4] (3) {$\text{Plcg}$};
  \node[rv, above right of=4] (5) {$\text{PIP3}$};
  \node[rv, below right of=2] (6) {$\text{Erk}$};
  \node[rv, below right of=6] (7) {$\text{Akt}$};
  \draw[->, very thick, blue] (2) -- (1);
\draw[->, very thick, blue] (8) -- (1);
\draw[->, very thick, blue] (9) -- (1);
\draw[->, very thick, blue] (9) -- (2);
\draw[o-o, very thick, black] (4) -- (3);
\draw[o-o, very thick, black] (5) -- (3);
\draw[o-o, very thick, black] (5) -- (4);
\draw[->, very thick, blue] (2) -- (6);
\draw[->, very thick, blue] (7) -- (6);
\draw[->, very thick, blue] (8) -- (6);
\draw[->, very thick, blue] (8) -- (7);
\draw[->, very thick, blue] (2) -- (8);
\draw[->, very thick, blue] (9) -- (8);
\draw[->, very thick, blue] (8) -- (10);
\draw[->, very thick, blue] (8) -- (11);
\draw[->, very thick, blue] (9) -- (11);
\draw[->, very thick, blue] (9) -- (10);

  \end{scope}
\end{tikzpicture}
\caption{wrong CPDAG returned by GES }
\label{fig: wrong sachs figure}
\end{figure}
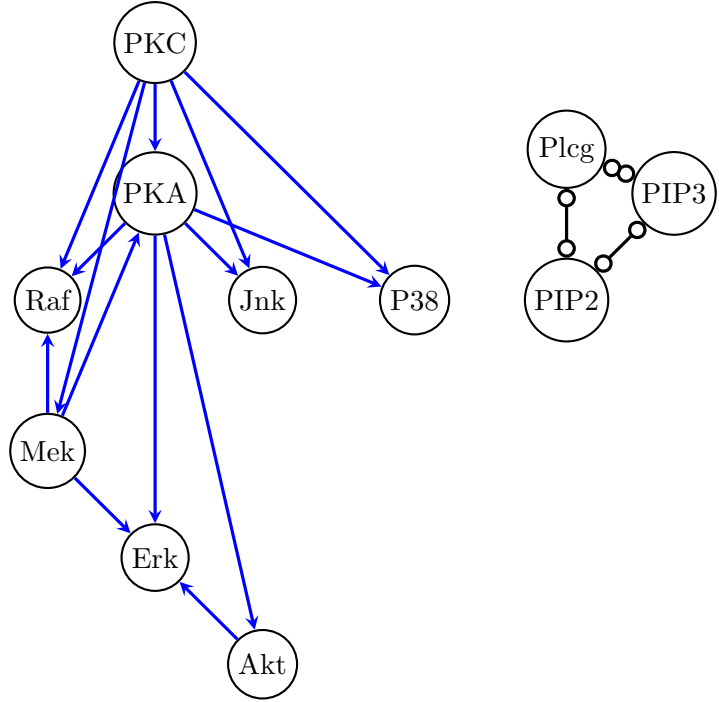   

\end{appendix}
\end{document}